\theoremstyle{plain}
\newtheorem{thm}{Theorem}[section]		
\newtheorem{prop}[thm]{Proposition}
\newtheorem{cor}[thm]{Corollary}
\newtheorem{lem}[thm]{Lemma}
\newtheorem{clm}[thm]{Claim}
\theoremstyle{definition}
\newtheorem{df}{Definition}[section]
\newtheorem{ques}[thm]{Question}
\theoremstyle{remark}
\newtheorem{rmk}{Remark}[section]
\newtheorem*{ac}{Acknowledgements}
\newcommand{\zz}{\mathbb{Z}}
\newcommand{\rr}{\mathbb{R}}
\DeclareMathOperator{\umetdis}{\mathcal{UD}}
\newcommand{\yochara}{\mathscr}
\newcommand{\youfin}{\yochara{N}}
\newcommand{\yofin}{\yochara{F}}
\newcommand{\yosub}{\subseteq}
\newcommand{\yodisdis}{M}
\newcommand{\yohdis}{\mathcal{HD}}
\newcommand{\yotomega}{\omega_{0}}
\newcommand{\yoofam}[1]{\mathbf{TEN}(#1)}
\newcommand{\yoofamqq}[2]{\mathbf{F}(#1/#2)}
\newcommand{\yopetal}[2]{\Pi(#1, #2)}
\newcommand{\yopetalq}[1]{\Pi(#1)}
\newcommand{\yotr}{\mathrm{Tr}}
\newcommand{\yoabset}{\Lambda}
\newcommand{\yonghsp}[1]{\mathscr{U}_{#1}}
\newcommand{\yonghdis}{\mathcal{NA}}
\newcommand{\yomapsco}[2]{\mathrm{G}(#1, #2)}
\newcommand{\yomaindisco}{\mathord{\vartriangle}}
\newcommand{\yodensea}{K}
\newcommand{\yodenseb}{L}
\newcommand{\yocantorc}{\Gamma}
\newcommand{\yocmet}[2]{\mathrm{Cpu}(#1, #2)}
\newcommand{\yomaps}[2]{\mathrm{C}_{0}(#1, #2)}
\newcommand{\yomaindis}{\mathord{\triangledown}}
\newcommand{\yocase}[2]{Case #1.~[#2]:}
\begin{document}

\title[Uniqueness and homogeneity]
{
Uniqueness and homogeneity   of 
non-separable Urysohn universal ultrametric spaces
}
\author[Yoshito Ishiki]
{Yoshito Ishiki}

\address[Yoshito Ishiki]
{\endgraf
Photonics Control Technology Team
\endgraf
RIKEN Center for Advanced Photonics
\endgraf
2-1 Hirasawa, Wako, Saitama 351-0198, Japan}

\email{yoshito.ishiki@riken.jp}

\date{\today}
\subjclass[2020]{Primary 54E35, 
Secondary 
51F99}
\keywords{Urysohn universal ultrametric space}

\begin{abstract}
Urysohn 
constructed a
separable complete 
universal metric space homogeneous for all finite subspaces, 
which is  today called the 
Urysohn universal metric space. 
Some  authors have  recently 
 investigated 
an ultrametric analogue of this space. 
The isometry problem of such ultrametric spaces  is 
 our main subject in this paper. 
We first  introduce 
the new notion of 
 petaloid ultrametric spaces, which 
is intended to be a standard
class of 
non-separable Urysohn universal ultrametric spaces. 
Next we  prove that all petaloid spaces are isometric to each other and  homogeneous for all finite subspaces (and compact subspaces). 
Moreover, we show that the 
following spaces are petaloid, and hence 
they  are isometric to each other and 
 homogeneous: 
(1) The space of all continuous functions, whose images contain the  zero,  from 
the Cantor set into the space of non-negative real numbers 
equipped with the nearly discrete topology, 
(2) the space of all continuous ultrametrics on 
a zero-dimensional infinite compact metrizable space, 
(3) 
the non-Archimedean 
Gromov--Hausdorff space, 
and (4)
the space of all maps   from the set of non-negative real numbers 
into the set of natural numbers
whose supports are finite or decreasing sequences convergent to the zero. 
\end{abstract}

\maketitle
\section{Introduction}
For a   class $\yochara{C}$
of metric spaces, 
a metric space $(X, d)$ is said to be 
\emph{$\yochara{C}$-injective} if 
for every pair  $(A, a)$ and $(B, b)$ in $\yochara{C}$ and 
for every pair of  isometric embeddings 
$\phi\colon (A, a)\to (B, b)$ and 
$\psi\colon (A, a)\to (X, d)$, 
there exists an isometric embedding 
$\theta\colon (B, b)\to (X, d)$ such that 
$\theta\circ \phi=\psi$. 
Let $\yofin$ denote 
the class of all finite metric spaces. 
Urysohn  
\cite{Ury1927}
constructed a separable 
complete $\yofin$-injective
 metric space, 
which is today called the 
\emph{Urysohn universal (metric) space}. 
A remarkable fact is that 
all
separable 
complete $\yofin$-injective
 metric spaces are isometric to each other. 
This isometry theorem  is proven   
  by 
the so-called back-and-forth argument, 
which is a variant of the mathematical induction. 
Some authors have recently  investigated a  
non-Archimedean analogue of the Urysohn universal space (see 
\cite{Ishiki2023const}, \cite{MR1354831}, \cite{MR2754373}, 
and \cite{MR4282005}), 
 which  is also our main subject of the paper.

A metric $d$ on a set $X$ is said to be 
an \emph{ultrametric} if 
for all $x, y, z\in X$, it satisfies the 
\emph{strong triangle inequality}
$d(x, y)\le d(x, z)\lor d(z, y)$, 
where $\lor$ stands for the maximum operator on $\rr$. 
If a pseudo-metric $d$ satisfies the strong triangle inequality, 
then $d$ is called a \emph{pseudo-ultrametric}. 

A set $R$ is said to be a \emph{range set} if 
it is a subset of $[0, \infty)$ and $0\in R$. 
If an ultrametric $d$ (resp.~pseudo-ultrametric) on a 
set $X$ satisfies $d(x, y)\in S$ for all $x, y\in R$, 
then we 
call $d$ an  \emph{$R$-ultrametric} or 
\emph{$R$-valued ultrametrics} 
(resp.~\emph{$R$-pseudo-ultrametric} or 
\emph{$R$-valued pseudo-ultrametrics}).

For a range set $R$, 
there are several  constructions of complete
$\youfin(R)$-injective $R$-valued ultrametric spaces. 
In \cite{Ishiki2023const}, 
the author provided some new constructions of 
complete $\youfin(R)$-injective $R$-ultrametric  space
using function spaces and spaces of ultrametrics
(see also \cite{MR1354831}). 
Wan 
\cite{MR4282005}
proved that the non-Archimedean 
Gromov--Hausdorff space associated with 
$R$ is $\youfin(R)$-injective. 
In 
 \cite{MR2754373}, 
 Gao and Shao 
 provided several 
 constructions of $\youfin(R)$-injective spaces. 
If $R$ is countable, then, 
  by the back-and-forth argument, 
a separable complete
$\youfin(R)$-injective $R$-ultrametric space is unique 
up to isometry 
whichever construction we choose. 
However,  when $R$ is uncountable, 
it was not known whether 
$\youfin(R)$-injective spaces  are isometric to each other. 
In this paper, we solve the isometry problem of 
Urysohn universal ultrametric spaces in the 
non-separable case.

By observing and comparing 
 constructions of 
$\youfin(R)$-injective spaces, 
we reach  the new notion of $R$-petaloid spaces, 
which is intended to be a standard 
class of non-separable 
Urysohn universal spaces.

Before explaining petaloid spaces, 
we prepare some notations and  notions. 
A subset $E$ of $[0, \infty)$ is said to be
 \emph{semi-sporadic} if 
there exists a strictly decreasing sequence 
$\{a_{i}\}_{i\in \zz_{\ge 0}}$ in $(0, \infty)$ such that 
$\lim_{i\to \infty}a_{i}=0$ and 
$E=\{0\}\cup \{\, a_{i}\mid i\in \zz_{\ge 0}\, \}$. 
A subset of $[0, \infty)$ is 
said to be
\emph{tenuous} 
if it is finite or semi-sporadic
(see \cite{Ishiki2023const}). 
For a range set $R$, 
we denote by $\yoofam{R}$
the set of all tenuous 
range subsets of $R$. 
In this paper, 
we often represent a restricted metric 
as the same symbol to the ambient one. 
For a metric space $(X, d)$, 
and for a subset $A$ of $X$, 
we write 
$d(x, A)=\inf_{a\in A}d(x, a)$. 

\begin{df}\label{df:petal}
Let $R$ be an uncountable range set.  
We say that a metric space  $(X, d)$ is \emph{$R$-petaloid} if 
it is an $R$-valued ultrametric space and 
there exists a family 
$\{\yopetal{X}{S}\}_{S\in \yoofam{R}}$
of subspaces of $X$
satisfying the following properties:
\begin{enumerate}[label=\textup{(P\arabic*)}]

\item\label{item:pr:sep}
For every  $S\in \yoofam{R}$, 
the subspace 
 $(\yopetal{X}{S}, d)$ is isometric to 
 the $S$-Urysohn universal ultrametric space.

\item\label{item:pr:cup}
We have 
$\bigcup_{S\in \yoofam{R}}\yopetal{X}{S}=X$.

\item\label{item:pr:cap}
If $S,  T\in \yoofam{R}$, then
$\yopetal{X}{S}\cap \yopetal{X}{T}=\yopetal{X}{S\cap T}$.

\item\label{item:pr:distance}
If $S, T\in \yoofam{R}$ and $x\in \yopetal{X}{T}$, then
$d(x, \yopetal{X}{S})$ belongs to 
$(T\setminus S)\cup \{0\}$.

\end{enumerate}
We call the family 
$\{\yopetal{X}{S}\}_{S\in \yoofam{R}}$ 
an \emph{$R$-petal of $X$}, and call
$\yopetal{X}{S}$  the $S$-piece of the $R$-petal 
$\{\yopetal{X}{S}\}_{S\in \yoofam{R}}$. 
We simply write  $\yopetalq{S}=\yopetal{X}{S}$
when the whole  space  is clear by the context. 
\end{df}
 
\begin{rmk}
Even when  $R$ is countable, 
the $R$-Urysohn universal ultrametric space 
has a petal structure satisfying 
all the properties appearing in  Definition \ref{df:petal}. 
\end{rmk}

Section \ref{sec:pre} 
presents 
some basic statements on 
petaloid spaces. 
In particular, 
it is shown that all petaloid spaces are 
complete.

Our main results consist of three parts. 
The 
uniqueness and the homogeneity of petaloid spaces will be proven in 
Theorems 
\ref{thm:main1} and 
\ref{thm:222}, respectively. 
The existence of petaloid spaces will be  given in 
Theorem \ref{thm:petaloidexam} that also provides examples of petaloid spaces. 

The next is our first main result. 
\begin{thm}\label{thm:main1}
Let $R$ be an uncountable  range set. 
If $(X, d)$ and $(Y, e)$ are
$R$-petaloid ultrametric
spaces, 
then $(X, d)$ and $(Y, e)$ are isometric
to each other. 
Namely, an $R$-petaloid ultrametric space is 
unique up to isometry. 
\end{thm}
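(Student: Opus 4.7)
The plan is to establish uniqueness by a transfinite back-and-forth argument, constructing an isometry $f\colon X\to Y$ one point at a time and reducing each extension step to the $S$-Urysohn universality of a single piece. Concretely, I would well-order $X$ and $Y$ and construct, by transfinite induction, a chain of partial isometries $f_\alpha\colon A_\alpha\to B_\alpha$ that alternately absorb the next unenumerated element of $X$ into the domain and the next unenumerated element of $Y$ into the range, taking $f_\alpha=\bigcup_{\beta<\alpha}f_\beta$ at limits. Throughout, I would maintain the \emph{piece-preserving} invariant: for every $S\in\yoofam{R}$ and every $a\in A_\alpha$, one has $a\in\yopetal{X}{S}$ if and only if $f_\alpha(a)\in\yopetal{Y}{S}$. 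This invariant passes to unions, so the real work lies in the successor step.

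The successor step reduces to the following extension lemma: given a piece-preserving partial isometry $f\colon A\to B$ and a point $x\in X\setminus A$, produce $y\in Y$ such that $f\cup\{(x,y)\}$ remains a piece-preserving partial isometry. I would pick a tenuous $S\in\yoofam{R}$ with $x\in\yopetal{X}{S}$. For each $a\in A$, property \ref{item:pr:distance} forces the elevation $r_a:=d(a,\yopetal{X}{S})$ to lie in $(R\setminus S)\cup\{0\}$, and I would choose a nearest point $\pi(a)\in\yopetal{X}{S}$ attaining this infimum; applying the strong triangle inequality in the triangle $x,\pi(a),a$ then gives the projection identity $d(x,a)=r_a\lor d(x,\pi(a))$. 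Hence the distance profile of the unknown $y$ against all of $B$ is controlled by its distances to the image $f(\{\pi(a):a\in A\})\subseteq\yopetal{Y}{S}$. Since $\yopetal{Y}{S}$ is the $S$-Urysohn universal ultrametric space, a (transfinite) direct-limit argument reducing the problem to one-point extensions of finite subspaces yields the desired $y\in\yopetal{Y}{S}$; the symmetric projection identity on the $Y$-side, combined with piece-preservation, then forces $e(y,f(a))=d(x,a)$ for every $a\in A$.

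The main obstacle is the projection step. First, I must ensure that $r_a=d(a,\yopetal{X}{S})$ is actually attained by some $\pi(a)\in\yopetal{X}{S}$; this should follow from completeness of the pieces (established in Section \ref{sec:pre}) together with the fact that the relevant values of $r_a$ lie in a tenuous set and are therefore discretely separated from any positive value. Second, and more seriously, the image $f(\pi(a))$ is only meaningful once $\pi(a)\in A$, so I would augment the back-and-forth with a preparatory \emph{closure-under-projections} step: before adjoining the next $x$-point, first adjoin to $A$ the projections $\pi(a)$ of all currently present points onto $\yopetal{X}{S}$ and match them on the $Y$-side (itself an $S$-Urysohn extension problem), iterated until saturation. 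Each such closure round introduces at most $|A|$ new points, so the bookkeeping can be organized so that the entire construction terminates within a sufficiently large ordinal.
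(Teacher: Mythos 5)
Your projection identity is fine (for $x\in\yopetal{X}{S}$ and a nearest point $\pi(a)$ of $a$ in $\yopetal{X}{S}$ one indeed gets $d(x,a)=d(a,\yopetal{X}{S})\lor d(x,\pi(a))$, and trace preservation plus Lemma \ref{lem:approx} matches the elevations on the $Y$-side), but the heart of your successor step is unjustified and, as stated, false. After the reduction, you must produce $y\in\yopetal{Y}{S}$ realizing a prescribed distance profile over $f(\pi(A))$, where $\pi(A)$ is an \emph{arbitrary} (typically infinite, non-totally-bounded, possibly uncountable) subset of the piece, and in addition $\yotr(y)$ must equal $\yotr(x)$ to keep your invariant. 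The $S$-Urysohn universality of the piece gives only $\youfin(S)$-injectivity (and, by Gao--Shao, injectivity over compact subsets); it does not give realization of one-point extensions over arbitrary infinite subsets, and your ``(transfinite) direct-limit argument reducing the problem to one-point extensions of finite subspaces'' does not work: the points realizing the profile over larger and larger finite pieces need not form a Cauchy sequence, so there is no limit to pass to. Worse, the needed statement can simply fail. Take $S$ containing $1$ and a value $s<1$, and let $C\subseteq\yopetal{Y}{S}$ be a $1$-equilateral set meeting every open $1$-ball of $\yopetal{Y}{S}$, while the corresponding set on the $X$-side misses some $1$-ball of $\yopetal{X}{S}$; the profile ``distance $1$ to every point'' is then realized by a point $x$ on the $X$-side but by no point of $Y$ at all (a point outside $\yopetal{Y}{S}$ cannot help, since by \ref{item:pr:distance} its elevation lies in $(T\setminus S)\cup\{0\}$, hence differs from $1$, and the isosceles rule then forces a wrong distance to some member of $C$). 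Nothing in your bookkeeping prevents such an asymmetry from accumulating after infinitely many point-by-point steps; preventing it is exactly the difficulty. The same problem infects your ``closure-under-projections'' round: matching the adjoined projections on the $Y$-side is not merely an $S$-Urysohn extension problem inside the piece, because the new points must have correct distances to \emph{all} of $B$ and must themselves be trace-preserving.

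This is precisely why the paper does not run a point-by-point transfinite back-and-forth over the non-separable space. Instead it only ever performs the classical countable back-and-forth \emph{inside a single separable piece}, relative to a finite union of smaller pieces (Lemmas \ref{lem:extpt} and \ref{lem:isopetapeta}, where the one-point step uses nearest points to finitely many pieces and the key fact from \ref{item:pr:distance} that the elevation avoids the value set of the piece), then assembles the resulting piece isometries over the directed family $\yoofamqq{R}{S}$ by transfinite induction with the coherence condition, and finally passes to all of $X$ by density (Lemma \ref{lem:dense}) and completeness (Proposition \ref{prop:petalcomp}). In that scheme the infinite sets over which one extends are always entire pieces, i.e.\ copies of the $S$-Urysohn space itself, never arbitrary subsets, so only finite injectivity and the separable uniqueness argument are needed. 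To salvage your approach you would have to prove a realization theorem for types over arbitrary subsets of a petaloid space (or a much stronger inductive invariant than trace preservation), which is not supplied and is the actual content of the theorem.
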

A metric space $(X, d)$ is said to be 
\emph{ultrahomogeneous}
(resp.~\emph{compactly ultrahomogeneous}) if 
for every finite subset
(resp.~compact subset) $A$ of $X$, 
and for every isometric embedding 
$\phi\colon A\to X$, 
there exists an isometric bijection 
 $F\colon X\to X$ such taht $F|_{A}=\phi$. 
 Remark that the usual  Urysohn universal metric space and the separable $\youfin(R)$-injective complete 
 $R$-ultrametric spaces are ultrahomogeneous
 and compactly ultrahomogeneous, 
 which are also proven by 
 the back-and-forth argument. 
 For the ultrahomogeneity,  see \cite[Theorem 3.2]{MR2435148} and \cite[Proposition 2.7]{MR2754373}. 
 For the compact ultrahomogeneity, see 
 \cite[Corollary 6.18]{MR2754373}, 
 \cite{MR0072454}, 
\cite{MR1781067}, 
and  
\cite[Subsection 4.5]{MR2435148}.

Our second result 
sates  that 
all 
petaloid 
 spaces satisfy
 the 
 compact 
ultrahomogeneity despite their  non-separability.

\begin{thm}\label{thm:222}
For every  uncountable 
range set $R$,  
every $R$-petaloid ultrametric space is 
compactly ultrahomogeneous. 
Consequently, it is ultrahomogeneous. 
\end{thm}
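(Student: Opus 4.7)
The proof combines the petal structure with the classical back-and-forth strategy, with three main steps.

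\emph{Step 1 (key lemma).} Every compact $A \yosub X$ is contained in some piece $\yopetalq{S}$ with $S \in \yoofam{R}$. The crucial ingredient is a ``refinement lemma'': if $b \in \yopetalq{T}$, $a \in \yopetalq{T_{a}}$, and $d(a, b) < \epsilon$, then $a$ already lies in $\yopetalq{T \cup (T_{a} \cap (0, \epsilon))}$. Indeed, setting $S' = T \cup (T_{a} \cap (0, \epsilon))$, property \ref{item:pr:distance} gives $d(a, \yopetalq{S'}) \in (T_{a} \setminus S') \cup \{0\}$, whereas $d(a, \yopetalq{S'}) \le d(a, b) < \epsilon$; any positive value here would have to lie in $T_{a} \setminus S'$, hence be $\ge \epsilon$, a contradiction. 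Thus $d(a, \yopetalq{S'}) = 0$, and closedness of $\yopetalq{S'}$ (from completeness of the Urysohn pieces) forces $a \in \yopetalq{S'}$. To apply this to a compact $A$, for each $a \in A$ choose a minimal tenuous $T_{a}$ with $a \in \yopetalq{T_{a}}$ (its existence following from closure of the family of such tenuous sets under intersections, via \ref{item:pr:cap}). A compactness argument then shows $S := \bigcup_{a \in A} T_{a}$ is tenuous: if $r>0$ were a positive accumulation point of $S$, realized by $r_{k} \in T_{a_{k}}$, a convergent subsequence $a_{k_{j}} \to a^{*}$ would, by the refinement lemma at scale $r/2$, force $T_{a_{k_{j}}} \cap [r/2, \infty) \yosub T_{a^{*}}$ for large $j$, so $r_{k_{j}} \in T_{a^{*}}$, contradicting the tenuousness of $T_{a^{*}}$.

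\emph{Step 2 (reduction to a single piece).} Given compact $A \yosub X$ and an isometric embedding $\phi \colon A \to X$, Step~1 applied to the compact set $A \cup \phi(A)$ yields $U \in \yoofam{R}$ with $A \cup \phi(A) \yosub \yopetalq{U}$. Since $U$ is countable, \ref{item:pr:sep} identifies $\yopetalq{U}$ with the separable $U$-Urysohn ultrametric space, which is compactly ultrahomogeneous by the classical back-and-forth argument. Thus $\phi$ extends to an isometric bijection $G \colon \yopetalq{U} \to \yopetalq{U}$.

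\emph{Step 3 (global extension).} Extend $G$ to an isometric bijection $F \colon X \to X$ by a transfinite back-and-forth on a well-ordering of $X \setminus \yopetalq{U}$. At each successor stage one extends the current partial isometry by one new point: given the new domain point $x$, one uses Step~1 to enclose the finite set of active constraints inside a tenuous piece $\yopetalq{V}$ with $U \yosub V$, and the universality of $\yopetalq{V}$ furnishes the required image. At limit stages one takes unions. The ultrahomogeneity for finite subsets is then an immediate consequence.

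I expect Step~1 to be the main obstacle: the delicate blend of \ref{item:pr:cap}, \ref{item:pr:distance}, ultrametric rigidity, and compactness of $A$ needed to produce the single tenuous $S$ requires the refinement-plus-assembly argument above, and in particular hinges on showing the family of tenuous sets with $a \in \yopetalq{T}$ admits a minimum. Once Step~1 is secured, Step~2 is routine, and Step~3 proceeds by the standard transfinite back-and-forth recipe, with the only nontrivial check being that every one-point extension respects \ref{item:pr:distance} on the target side, which is exactly what the tenuous enclosure $\yopetalq{V}$ arranges.
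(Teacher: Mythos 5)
Your Steps 1 and 2 are essentially sound and in fact recover the paper's Proposition \ref{prop:compacttrace} by a slightly different route: your refinement lemma is a correct use of \ref{item:pr:distance} together with closedness of the pieces, and the compactness argument (supplemented by the easy observation that $S\cap[\rho,\infty)$ is also bounded, via the same lemma at scale $\di(A)+1$) does yield a single tenuous $S$ with $A\yosub\yopetalq{S}$. One point you underjustify: the existence of a \emph{minimal} tenuous $T_{a}$ with $a\in\yopetalq{T_{a}}$ does not follow from \ref{item:pr:cap}, which only handles finite intersections; this is exactly Lemma \ref{lem:trace}\ref{item:tr2} in the paper and needs an approximation argument (which, to be fair, your own refinement lemma can supply). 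Step 2 matches the paper: once $A\cup\phi(A)$ sits in one separable piece, its compact ultrahomogeneity gives the bijection $G$ of $\yopetalq{U}$.

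The genuine gap is Step 3. Extending an isometry by one point requires matching the distance to \emph{every} point of the current domain exactly; there is no ``finite set of active constraints.'' Already at the first successor stage the domain contains all of $\yopetalq{U}$, and after transfinitely many point-by-point extensions the domain is an arbitrary (typically uncountable) subset of $X$ scattered far outside any single piece. At that stage neither the $\youfin(R)$-injectivity of $X$ nor the universality of a separable piece $\yopetalq{V}$ can furnish the required image: the constraints involve points that do not lie in $\yopetalq{V}$, and the ultrametric reduction to a nearest point breaks down for the (possibly infinite) set of domain points realizing the minimal distance, which impose genuinely independent equalities. This is precisely the difficulty the paper's Theorem \ref{thm:petoidext} is built to overcome: the recursion is organized over tenuous sets $T\in\yoofamqq{R}{S}$ rather than over points, so that at every stage the partial map is defined on a union of pieces; Lemma \ref{lem:extpt} handles one-point extensions relative to a finite union of pieces plus a finite set, using nearest points (Lemma \ref{lem:18:p6}) and the fact from \ref{item:pr:distance} that the distance to a piece avoids that piece's value set, which kills the equality case; Lemma \ref{lem:isopetapeta} then runs the countable back-and-forth inside one piece, and the global map is obtained by coherence, density (Lemma \ref{lem:dense}) and completeness. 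Without maintaining such a piecewise structure as an invariant, your transfinite one-point back-and-forth does not go through, so the main non-separable gluing step of the theorem is missing.
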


The proofs of 
 Theorems \ref{thm:main1} and 
\ref{thm:222} will be presented in 
Section \ref{sec:uandh}, 
and  both of them are  corollaries  
of 
Theorem  \ref{thm:petoidext} asserting 
that for every uncountable range set $R$, 
for every
 $S\in \yoofam{R}$, and
 for all  $R$-petaloid spaces 
 $(X, d)$ and $(Y, e)$, every isometric bijection 
 $f\colon \yopetal{X}{S}\to \yopetal{Y}{S}$ can be 
 extended to  an isometric bijection 
 $F\colon X\to Y$. 
In  the proof of Theorem  \ref{thm:petoidext},  to obtain an isometric bijection 
$F\colon X \to Y$, 
we construct isometric bijections  between 
$\yopetal{X}{T}$ and $\yopetal{Y}{T}$ for all 
$T\in \yoofam{R}$ such that 
$T\setminus S$ is finite using the 
back-and-forth argument,  
 and we glue them together by the transfinite induction.

In the next theorem, 
we prove the existence of petaloid spaces and 
we show that 
 injective spaces 
constructed  in 
\cite{Ishiki2023const}, 
\cite{MR4282005}, and 
\cite{MR2754373}
become naturally  petaloid. 
The proof  and 
the precise 
definitions of  the spaces appearing below will be given in 
Section \ref{sec:examples}. 

\begin{thm}\label{thm:petaloidexam}
For every uncountable range set $R$, 
all the following spaces are $R$-petaloid. 

\begin{enumerate}[label=\textup{(\arabic*)}]
\item\label{item:sp:maps:th}
The 
space $(\yomaps{\yocantorc}{R}, \yomaindis)$ of 
all continuous functions $f$ from the 
Cantor set $\yocantorc$ into the space $R$ equipped with 
the nearly discrete topology such that $0\in f(X)$. 

\item\label{item:sp:met:th}
The ultrametric space
$(\yocmet{X}{R}, \umetdis_{X}^{R})$ 
of 
all $R$-valued  continuous 
pseudo-ultrametrics on a 
compact ultrametrizable  space  $X$
with an accumulation point.

\item\label{item:sp:gh:th}
The non-Archimedean Gromov--Hausdorff space 
$(\yonghsp{R}, \yonghdis)$ 
associated with $R$.

\item\label{item:sp:cofunc:th}
The space 
$(\yomapsco{R}{\yotomega}, \yomaindisco)$
of all maps $f\colon R\to \omega_{0}$ 
for which $f(0)=0$ and 
the  support of $f$ are tenuous, 
where $\omega_{0}$ is the set of all natural numbers. 
\end{enumerate}
\end{thm}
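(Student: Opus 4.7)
The plan is, for each of the four constructions, to exhibit the obvious candidate family $\{\yopetalq{S}\}_{S \in \yoofam{R}}$ and then verify properties \ref{item:pr:sep}--\ref{item:pr:distance} in turn. In \ref{item:sp:maps:th} I would take $\yopetalq{S}$ to be the subspace of those $f \in \yomaps{\yocantorc}{R}$ whose image is contained in $S$; in \ref{item:sp:met:th}, the subspace of $S$-valued continuous pseudo-ultrametrics on $X$; in \ref{item:sp:gh:th}, the subspace of isometry classes of non-empty compact $S$-ultrametric spaces; in \ref{item:sp:cofunc:th}, the subspace of $f\colon R\to \yotomega$ with $\supp f \yosub S$.

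Property \ref{item:pr:sep} follows in each case from the existing injectivity results of \cite{Ishiki2023const}, \cite{MR4282005}, and \cite{MR2754373} applied with the range set $S$: the restriction of the ambient ultrametric makes each $\yopetalq{S}$ a complete $\youfin(S)$-injective $S$-ultrametric space, and since $S$ is tenuous hence countable the space is separable, so the standard back-and-forth argument identifies it with the $S$-Urysohn universal ultrametric space. Property \ref{item:pr:cup} is where the ``tenuous content'' of each ambient space enters: for \ref{item:sp:maps:th} the image of a continuous map from $\yocantorc$ into $R$ with the nearly discrete topology is compact in $R$ and hence tenuous; for \ref{item:sp:met:th} an analogous compactness argument bounds the value set of a continuous pseudo-ultrametric on a compact domain; for \ref{item:sp:gh:th} the distance set of a compact ultrametric space is automatically tenuous because total boundedness leaves only finitely many distances above each $\varepsilon>0$; for \ref{item:sp:cofunc:th} tenuousness of the support is part of the definition. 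Property \ref{item:pr:cap} is an immediate set-theoretic identity.

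The main obstacle is \ref{item:pr:distance}. For each space I would produce an explicit ``truncation'' $g \in \yopetalq{S}$ attached to a given $f \in \yopetalq{T}$ by redirecting the part of $f$ living in $T \setminus S$ into safe values in $S$: for \ref{item:sp:maps:th} and \ref{item:sp:cofunc:th} by replacing forbidden values with $0$; for \ref{item:sp:met:th} by collapsing the equivalence classes of the closed balls of a forbidden radius; for \ref{item:sp:gh:th} by taking the quotient metric space induced by the equivalence relation ``distance lies in $T\setminus S$''. A direct computation then shows that $d(f,g)$ equals the maximum value in $T\setminus S$ actually attained by $f$ (or $0$ if no such value exists), which is well-defined precisely because $T$ is tenuous. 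The matching lower bound---no element of $\yopetalq{S}$ lies strictly closer to $f$---follows from the strong triangle inequality, since any hypothetical $h \in \yopetalq{S}$ with $d(f,h)$ strictly smaller than this maximum would force $f$ to record at the relevant location a value actually lying in $S$. The bulk of the verification in each case is checking that the constructed $g$ genuinely lives in the ambient space (continuity, strong triangle inequality, compactness of support, and so on), all of which I expect to be routine once the truncation is set up, with the case \ref{item:sp:gh:th} requiring the most care to identify the quotient as a compact $S$-ultrametric space.
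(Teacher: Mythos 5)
Your proposal takes essentially the same route as the paper: the same candidate petals $\yopetalq{S}$ in all four cases, \ref{item:pr:sep} from the known injectivity, completeness and separability results applied to the range set $S$, \ref{item:pr:cup}--\ref{item:pr:cap} from tenuity of value sets, and \ref{item:pr:distance} via an explicit truncation realizing the distance to $\yopetalq{S}$ together with a characterization of when the distance equals a given $r$, which is exactly how the paper argues (in detail for $(\yomapsco{R}{\yotomega}, \yomaindisco)$, by cited corollaries for the other three spaces). One small repair in case \ref{item:sp:gh:th}: ``distance lies in $T\setminus S$'' is not an equivalence relation and collapsing only such pairs need not produce a pseudo-ultrametric; instead collapse all pairs at distance at most $\max(T\setminus S)$ (as you implicitly do in case \ref{item:sp:met:th}), which yields a compact $S$-valued quotient within the required distance, the matching lower bound coming from the known comparison of distance spectra under the non-Archimedean Gromov--Hausdorff distance.
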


\begin{rmk}
Intriguingly, 
it is unknown 
 whether 
petaloid ultrametric spaces 
can be obtained by the
classical constructions, 
i.e.,  the method using 
the Kat\v{e}tov function spaces,  and 
the Urysohn-type amalgamation 
(the  way of  the  Fra\"{i}ss\'{e} limit). 
\end{rmk}

\begin{rmk}
Theorem 
\ref{thm:petaloidexam}
can be 
considered as 
a partial answer of 
\cite[Question 3.1]{Z2005}, 
which 
asks to  describe  the topology of 
the non-Archimedean Gromov--Hausdorff space. 
\end{rmk}

\section{Preliminaries}\label{sec:pre}
Before proving the existence of 
petaloid spaces, 
we axiomatically 
discuss their basic properties  in this section,
and 
the assertions herein  are
prepared to support the proof of  Theorems \ref{thm:main1} and 
\ref{thm:222}. 
To readers who are wondering if there exists a petaloid space, 
the author kindly  recommends
reading 
the proof of Theorem 
\ref{thm:petaloidexam} first, 
which is independent from 
this section and 
 Theorems \ref{thm:main1} and 
\ref{thm:222}. 
\begin{lem}\label{lem:000}
Let $R$ be an uncountable range set, and 
$(X, d)$ be an $R$-petaloid ultrametric space. 
Then the set $\yopetal{X}{\{0\}}$ is a 
singleton. 
\end{lem}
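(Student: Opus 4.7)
The plan is to identify $\yopetal{X}{\{0\}}$ as a copy of the $\{0\}$-Urysohn universal ultrametric space via property \ref{item:pr:sep} and then observe that any $\{0\}$-valued ultrametric space has at most one point.

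First I would verify that $\{0\}$ belongs to $\yoofam{R}$: it is finite, hence tenuous, and it is a range subset of $R$ since $0 \in R$. Hence \ref{item:pr:sep} applies with $S = \{0\}$ and tells us that $(\yopetal{X}{\{0\}}, d)$ is isometric to the $\{0\}$-Urysohn universal ultrametric space. Any $\{0\}$-valued (ultra)metric $\rho$ on a set $Z$ forces $\rho(z, z') = 0$ for all $z, z' \in Z$, and since a metric separates points we conclude $z = z'$. Thus $\yopetal{X}{\{0\}}$ has at most one element.

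It remains to rule out the possibility that $\yopetal{X}{\{0\}}$ is empty. For this I would combine \ref{item:pr:cup} with \ref{item:pr:distance}. Fixing any $x \in X$ (which exists since, for instance, any non-degenerate $S \in \yoofam{R}$ yields a non-empty $\yopetal{X}{S}$ via \ref{item:pr:sep}), property \ref{item:pr:cup} gives some $T \in \yoofam{R}$ with $x \in \yopetal{X}{T}$. Then \ref{item:pr:distance} forces $d(x, \yopetal{X}{\{0\}}) \in (T \setminus \{0\}) \cup \{0\} \subseteq [0, \infty)$, which would be impossible (as the infimum over the empty set is $+\infty$) if $\yopetal{X}{\{0\}}$ were empty. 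Consequently $\yopetal{X}{\{0\}}$ is non-empty and, combined with the previous paragraph, is a singleton.

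There is no substantive obstacle here; the argument is essentially a matter of unwinding the definitions. The only subtlety worth stating carefully is the convention that the $\{0\}$-Urysohn universal ultrametric space is understood to be non-empty, which is why I prefer to derive non-emptiness directly from the petal axioms \ref{item:pr:cup} and \ref{item:pr:distance} rather than invoking it as part of \ref{item:pr:sep}.
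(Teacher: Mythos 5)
Your proof is correct and follows essentially the same route as the paper: invoke \ref{item:pr:sep} to identify $\yopetal{X}{\{0\}}$ with the $\{0\}$-Urysohn universal ultrametric space and note that a $\{0\}$-valued ultrametric space can have at most one point. Your extra step ruling out emptiness via \ref{item:pr:cup} and \ref{item:pr:distance} is a harmless refinement of a point the paper settles by convention (the $\{0\}$-Urysohn universal space is the non-empty one-point space, as $\youfin(\{0\})$-injectivity already forces a point to exist).
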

\begin{proof}
The property \ref{item:pr:sep} implies that 
$\yopetal{X}{\{0\}}$ is the $\{0\}$-Urysohn universal 
ultrametric space. 
Since every $\{0\}$-valued ultrametric space must be 
isometric to 
the one-point space, 
so is the space 
$\yopetal{X}{\{0\}}$. 
\end{proof}

\begin{lem}\label{lem:18:p6}
Let $R$ be an uncountable  range set, 
and $(X, d)$ be the $R$-petaloid ultrametric space. 
Then 
for every $S\in \yoofam{R}$,  and 
for every $x\in X$, there exists 
$p\in \yopetal{X}{S}$ 
such that $d(x, \yopetal{X}{S})=d(x, p)$. 
\end{lem}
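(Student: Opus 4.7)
The plan is to reduce to working inside a single tenuous piece containing both $x$ and a minimizing sequence, and then exploit the fact that the nonzero elements of a tenuous set are isolated in $[0, \infty)$.

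First I would use property \ref{item:pr:cup} to fix $T \in \yoofam{R}$ with $x \in \yopetal{X}{T}$, and set $U = S \cup T$. The initial step is to verify $U \in \yoofam{R}$: the union of two subsets of $[0, \infty)$, each of which is finite or semi-sporadic, is countable with $0$ as its only possible accumulation point in $[0, \infty)$, hence is itself finite or can be enumerated as $\{0\}$ together with a strictly decreasing sequence tending to $0$. Applying \ref{item:pr:cap} to the pairs $(S, U)$ and $(T, U)$, and using $S \cap U = S$, $T \cap U = T$, one obtains $\yopetal{X}{S} \subseteq \yopetal{X}{U}$ and $x \in \yopetal{X}{T} \subseteq \yopetal{X}{U}$. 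By \ref{item:pr:sep}, $\yopetal{X}{U}$ is $U$-valued, so every distance between $x$ and a point of $\yopetal{X}{S}$ lies in $U$.

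Now set $r := d(x, \yopetal{X}{S})$ and split into two cases. If $r = 0$, then since the $S$-Urysohn universal ultrametric space is by definition complete and $\yopetal{X}{S}$ is isometric to it via \ref{item:pr:sep}, the subspace $\yopetal{X}{S}$ is complete, hence closed in $(X, d)$; therefore $x \in \yopetal{X}{S}$ and we may take $p = x$. If $r > 0$, I would pick a sequence $\{p_{n}\}$ in $\yopetal{X}{S}$ with $d(x, p_{n}) \to r$. Each $d(x, p_{n})$ belongs to the tenuous set $U$, which is closed in $[0, \infty)$ since its only possible accumulation point there is $0$; consequently $r \in U$, and because $r > 0$ the value $r$ is an isolated point of $U$. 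Hence $d(x, p_{n}) = r$ for all sufficiently large $n$, and any such $p_{n}$ is the desired point.

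The only mildly delicate ingredient is the verification that $\yoofam{R}$ is closed under binary unions, which is precisely what lets the sequence $\{d(x, p_{n})\}$ be trapped in a set whose positive elements are isolated. Once this is secured, the argument is a short unpacking of the petal axioms together with completeness of the $S$-piece, and no back-and-forth machinery is needed.
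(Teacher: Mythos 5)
Your argument is correct, but it takes a different route from the paper. The paper disposes of this lemma by citation: it invokes two propositions from the reference \cite{MR2444734} on best approximations, whose hypotheses are exactly that $\yopetal{X}{S}$ is an $S$-valued (complete) subspace with $S$ tenuous; the underlying mechanism there is the isosceles-triangle property of ultrametrics, which forces the distances \emph{inside} $\yopetal{X}{S}$ to accumulate near $d(x,\yopetal{X}{S})$ if the infimum were not attained, contradicting discreteness of $S$ away from $0$. That argument needs nothing about the ambient point $x$ beyond its lying in some ultrametric space containing $\yopetal{X}{S}$. You instead use the petal axioms themselves: \ref{item:pr:cup} to place $x$ in some piece $\yopetal{X}{T}$, closure of tenuous range sets under finite unions (the paper's Lemma \ref{lem:noref}) to form $U=S\cup T\in\yoofam{R}$, and \ref{item:pr:cap} (equivalently Lemma \ref{lem:petalinclusion}) to put both $x$ and $\yopetal{X}{S}$ inside the $U$-valued piece $\yopetal{X}{U}$, so that the approximating distances $d(x,p_{n})$ themselves lie in the tenuous set $U$; then closedness of $U$ and isolatedness of its positive points (Lemma \ref{lem:discom}) give attainment, with completeness of $\yopetal{X}{S}$ handling the case $d(x,\yopetal{X}{S})=0$. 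Your version is self-contained and avoids the external reference, at the cost of using the petaloid structure (so it is a statement about petaloid spaces rather than about arbitrary points near an $S$-valued subspace); the paper's route is shorter and isolates a reusable general fact. Both are complete proofs.
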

\begin{proof}
The lemma is  deduced from  
\cite[Propositions  20.2 and  21.1]{MR2444734}
and the facts that 
$\yopetal{X}{S}$ 
$S$-valued and $S$ is tenuous
(see the property \ref{item:pr:sep}). 
\end{proof}

\begin{lem}\label{lem:petalinclusion}
Let $R$ be an uncountable range set, 
and $(X, d)$ be an $R$-petaloid space. 
If $S, T\in\yoofam{R}$ satisfy $S\yosub T$, then 
$\yopetal{X}{S}\yosub \yopetal{X}{T}$. 
\end{lem}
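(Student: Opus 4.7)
The plan is to show that for any $x\in \yopetal{X}{S}$, the distance $d(x,\yopetal{X}{T})$ must be zero, and then to use Lemma \ref{lem:18:p6} (attainment of the infimum) to conclude $x\in \yopetal{X}{T}$.

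First, I would apply property \ref{item:pr:distance} with the roles of $S$ and $T$ swapped: since $x\in \yopetal{X}{S}$, the value $d(x,\yopetal{X}{T})$ lies in $(S\setminus T)\cup\{0\}$. The hypothesis $S\yosub T$ forces $S\setminus T=\emptyset$, so necessarily $d(x,\yopetal{X}{T})=0$.

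Next, I would invoke Lemma \ref{lem:18:p6} (applied to the piece $\yopetal{X}{T}$) to produce a point $p\in \yopetal{X}{T}$ with $d(x,p)=d(x,\yopetal{X}{T})=0$. Since $d$ is a metric, this gives $x=p\in \yopetal{X}{T}$, completing the proof.

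I do not anticipate a genuine obstacle here; the argument is a short deduction that packages together the two already-proved facts. The only subtle point is that one has to use \ref{item:pr:distance} in the direction where $x$ is in $\yopetal{X}{S}$ and the target set is $\yopetal{X}{T}$ (not the other way around), so that the ``forbidden'' set becomes $S\setminus T$ rather than $T\setminus S$. Once that orientation is fixed, the inclusion $S\yosub T$ kills the forbidden set and the conclusion follows immediately from the attainment lemma.
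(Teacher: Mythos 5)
Your argument is correct and complete: for $x\in\yopetal{X}{S}$, applying \ref{item:pr:distance} with the roles of the two index sets swapped gives $d(x,\yopetal{X}{T})\in (S\setminus T)\cup\{0\}=\{0\}$, and Lemma \ref{lem:18:p6} (which precedes this lemma and does not depend on it, so there is no circularity) converts distance zero into membership. The paper itself only says the lemma follows from ``\ref{item:pr:cap} or \ref{item:pr:distance}'', and the quickest of these two routes is the one you did not take: since $S\yosub T$ forces $S\cap T=S$, property \ref{item:pr:cap} gives $\yopetal{X}{S}=\yopetal{X}{S}\cap\yopetal{X}{T}$, and the inclusion follows by pure set algebra, with no metric reasoning. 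Your route is the \ref{item:pr:distance}-based alternative the paper alludes to; it costs the extra step of passing from $d(x,\yopetal{X}{T})=0$ to $x\in\yopetal{X}{T}$, which you supply correctly via the attainment lemma (one could equally use that $\yopetal{X}{T}$ is closed in $X$, being complete by \ref{item:pr:sep}), but in exchange it uses only the ``forbidden distance'' axiom and the attainment property, so it would still work under a weakening of \ref{item:pr:cap} to a mere inclusion. Both arguments are valid; the \ref{item:pr:cap} argument is the more elementary one.
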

\begin{proof}

From 
the properties  
\ref{item:pr:cap} or \ref{item:pr:distance}, 
we deduce the 
lemma. 
\end{proof}

The proof of the next lemma is presented in 
\cite[Lemma 2.12]{Ishiki2023const}. 
\begin{lem}\label{lem:discom}
Let $K$ be a subset of $[0, \infty)$. 
Then 
$K$ is tenuous if and only if 
$K$ is a closed subset of $[0, \infty)$ with respect to the Euclidean topology 
 and satisfies that 
  $K\cap [r, \infty)$ is finite
  for all $r\in (0, \infty)$. 
  \end{lem}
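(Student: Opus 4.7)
The plan is to prove both implications separately, decomposing each according to whether $K$ is finite.

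For the forward direction, assume $K$ is tenuous. The finite case is immediate: a finite subset of $[0,\infty)$ is automatically closed in the Euclidean topology, and $K\cap [r,\infty)$ is a subset of a finite set, hence finite. In the semi-sporadic case, write $K=\{0\}\cup \{a_{i}\}_{i\in \zz_{\ge 0}}$ with $a_{i}\searrow 0$ strictly decreasing. Closedness follows because the only possible accumulation point of $K$ in $[0,\infty)$ is $0$ (the sequence converges to $0$), and $0\in K$. For any $r>0$, since $a_{i}\to 0$, only finitely many indices satisfy $a_{i}\ge r$, so $K\cap [r,\infty)$ is finite.

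For the backward direction, suppose $K$ is closed and $K\cap [r,\infty)$ is finite for every $r\in(0,\infty)$. If $K$ is finite, it is tenuous by definition, so assume $K$ is infinite. Then I would first show $0\in K$: since $K$ is infinite while $K\cap [1/n,\infty)$ is finite for each $n$, the set $K$ must meet every interval $[0,1/n)$, and hence $0$ is a limit point of $K$; closedness then forces $0\in K$. Next, enumerate the elements of $K\setminus\{0\}$ in strictly decreasing order as a sequence $\{a_{i}\}_{i\in \zz_{\ge 0}}$. This is possible because for any candidate largest element, $K\cap [r,\infty)$ is finite, so one can peel off the maximum at each step; moreover $K\setminus\{0\}=\bigcup_{n}(K\cap[1/n,\infty))$ is countable. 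Finally, $\lim_{i\to\infty} a_{i}=0$, since otherwise the sequence would be bounded below by some $\varepsilon>0$, contradicting finiteness of $K\cap [\varepsilon,\infty)$. Thus $K$ is semi-sporadic.

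No single step looks genuinely hard; the mildest subtlety is organizing the elements of $K\setminus\{0\}$ into a strictly decreasing sequence and confirming this sequence exhausts $K\setminus\{0\}$, which is where the finiteness of $K\cap[r,\infty)$ is used essentially.
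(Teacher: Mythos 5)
Your proof is correct. Note that the paper does not prove this lemma internally at all: it simply cites Lemma 2.12 of \cite{Ishiki2023const}, so your self-contained argument supplies exactly the routine verification that the paper outsources, and it is the natural one (finite case trivial; semi-sporadic case via the accumulation point $0$; converse by extracting the strictly decreasing enumeration of $K\setminus\{0\}$ by repeatedly taking maxima, which exist because $K\cap[r,\infty)$ is finite). One cosmetic point in the backward direction: to conclude that $0$ is a limit point you should say not merely that $K$ meets every $[0,1/n)$ (which could in principle happen with the single point $0$), but that $K\cap[0,1/n)$ is infinite, being the infinite set $K$ minus the finite set $K\cap[1/n,\infty)$, and hence contains points of $(0,1/n)$; this is exactly the observation you already use elsewhere, so it is a one-line fix rather than a gap.
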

  
By the definition of the tenuous sets, 
we obtain: 
\begin{lem}\label{lem:noref}
If  $\{S_{i}\}_{i=0}^{k}$  is a finite family of 
tenuous subset of $[0, \infty)$, 
then the unison $\bigcup_{i=0}^{k}S_{i}$ is 
also tenuous. 
\end{lem}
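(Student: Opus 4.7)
The plan is to reduce the statement to the characterization of tenuous subsets provided by Lemma \ref{lem:discom}, since that characterization is formulated entirely in terms of two properties that behave well under finite unions.

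First I would note that by Lemma \ref{lem:discom}, it suffices to verify two things about $U = \bigcup_{i=0}^{k} S_{i}$: (a) $U$ is closed in $[0, \infty)$ with respect to the Euclidean topology, and (b) for every $r \in (0, \infty)$, the set $U \cap [r, \infty)$ is finite. For (a), each $S_{i}$ is tenuous, so by Lemma \ref{lem:discom} each $S_{i}$ is closed, and a finite union of closed sets is closed. For (b), I would use the distributive identity
\[
U \cap [r, \infty) \;=\; \bigcup_{i=0}^{k} \bigl(S_{i} \cap [r, \infty)\bigr),
\]
and since each $S_{i} \cap [r, \infty)$ is finite by Lemma \ref{lem:discom}, the right-hand side is a finite union of finite sets, hence finite.

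Alternatively, one could argue directly from the definition by induction on $k$, enumerating the points of the union above any threshold $r$ in decreasing order; but passing through Lemma \ref{lem:discom} is cleaner and avoids a bookkeeping argument about merging finitely many sequences that converge to $0$. I do not anticipate any real obstacle: the proof is a routine verification, and the only subtlety is recognizing that Lemma \ref{lem:discom} converts a statement about order-theoretic structure (semi-sporadic or finite) into a topological/finiteness statement that is manifestly preserved by finite unions.
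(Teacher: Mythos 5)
Your proof is correct. The paper gives no argument for this lemma at all --- it simply states that it follows ``by the definition of the tenuous sets'' --- so there is nothing to conflict with; your reduction to Lemma \ref{lem:discom} (closedness passes to finite unions, and $\bigl(\bigcup_{i=0}^{k}S_{i}\bigr)\cap[r,\infty)=\bigcup_{i=0}^{k}\bigl(S_{i}\cap[r,\infty)\bigr)$ is a finite union of finite sets) is a complete and clean verification. It is also consistent in spirit with how the paper handles the harder countable-union analogue, Lemma \ref{lem:unionrange}, whose proof likewise routes through the characterization in Lemma \ref{lem:discom} rather than merging decreasing null sequences by hand; the only thing the definitional route would buy is independence from the cited characterization, at the cost of exactly the bookkeeping you mention.
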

In what follows, 
without referring to Lemma 
\ref{lem:noref}, 
we use the property that 
the  union of finite many 
tenuous sets is tenuous.

We now discuss 
the completeness of petaloid spaces. 
Let $R$ be an uncountable  range set, 
and  $(X, d)$ be 
an $R$-petaloid ultrametric space. 
For every $x\in X$, 
we define the 
\emph{trace} $\yotr(x)$ of $x$ by 
$\yotr(x)=\bigcap \{\, S\in \yoofam{R}\mid x\in \yopetalq{S}\, \}$. 
Note that $\yotr(x)\in \yoofam{R}$ and  
 $\yotr(x)\yosub S$ whenever $x\in \yopetalq{S}$. 

\begin{lem}\label{lem:trace}
Let $R$ be an uncountable range set, 
and $(X, d)$ be an $R$-petaloid ultrametric space. 
Then 
the following are true: 
\begin{enumerate}[label=\textup{(\arabic*)}]

\item\label{item:tr1}
For every $x\in X$, and for every $r\in (0, \infty)$, 
there exists $S\in \yoofam{R}$ such that 
$\yotr(x)\cap (r, \infty)=S\cap (r, \infty)$ and 
$x\in \yopetalq{S}$.

\item\label{item:tr2}
For every $x\in X$, we have 
$x\in \yopetalq{\yotr(x)}$. 

\end{enumerate}
\end{lem}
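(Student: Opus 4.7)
The plan is to treat the two items in order, with part \ref{item:tr1} supplying the key selection device used in part \ref{item:tr2}. Throughout, write $\mathcal{A}_{x}=\{\, S\in \yoofam{R}\mid x\in \yopetalq{S}\,\}$; by \ref{item:pr:cup} this family is nonempty, and by \ref{item:pr:cap} it is closed under finite intersections (the intersection of finitely many tenuous sets being tenuous).

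For part \ref{item:tr1}, I would exploit the tenuousness of each $S\in\mathcal{A}_{x}$: by Lemma \ref{lem:discom}, the set $S\cap (r,\infty)$ is finite for every $r\in(0,\infty)$, so $\{\, \card(S\cap(r,\infty))\mid S\in\mathcal{A}_{x}\,\}$ is a nonempty subset of $\zz_{\ge 0}$ and attains a minimum at some $S_{0}\in\mathcal{A}_{x}$. For any $T\in\mathcal{A}_{x}$, the intersection $S_{0}\cap T$ again belongs to $\mathcal{A}_{x}$ and satisfies $(S_{0}\cap T)\cap(r,\infty)\subseteq S_{0}\cap(r,\infty)$, so minimality forces equality and hence $S_{0}\cap(r,\infty)\subseteq T\cap(r,\infty)$. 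Intersecting over $T\in\mathcal{A}_{x}$ gives $S_{0}\cap(r,\infty)\subseteq \yotr(x)\cap(r,\infty)$; the reverse inclusion is immediate from $\yotr(x)\subseteq S_{0}$. Taking $S=S_{0}$ settles \ref{item:tr1}.

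For part \ref{item:tr2}, I first check that $\yotr(x)\in\yoofam{R}$: as an intersection of closed (by Lemma \ref{lem:discom}) subsets of $[0,\infty)$ it is closed, and for any $r\in(0,\infty)$ and any $S\in\mathcal{A}_{x}$ the inclusion $\yotr(x)\cap[r,\infty)\subseteq S\cap[r,\infty)$ forces the left-hand side to be finite, so Lemma \ref{lem:discom} declares $\yotr(x)$ tenuous; moreover $0\in\yotr(x)$ since $0$ lies in every range set. Now suppose for contradiction that $r:=d(x,\yopetalq{\yotr(x)})>0$. Applying \ref{item:tr1} with $r/2$ in place of $r$ produces $S\in\mathcal{A}_{x}$ with $S\cap(r/2,\infty)=\yotr(x)\cap(r/2,\infty)$. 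Since $x\in\yopetalq{S}$, property \ref{item:pr:distance} yields $d(x,\yopetalq{\yotr(x)})\in (S\setminus\yotr(x))\cup\{0\}$, so $r\in S$ and $r\notin\yotr(x)$. But $r>r/2$ places $r$ in $S\cap(r/2,\infty)=\yotr(x)\cap(r/2,\infty)$, contradicting $r\notin\yotr(x)$. Hence $d(x,\yopetalq{\yotr(x)})=0$, and Lemma \ref{lem:18:p6} supplies $p\in\yopetalq{\yotr(x)}$ with $d(x,p)=0$, so $x=p\in\yopetalq{\yotr(x)}$.

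The main obstacle is really concentrated in part \ref{item:tr1}: without a single $S_{0}\in\mathcal{A}_{x}$ whose truncation above $r$ already coincides with that of $\yotr(x)$, one has no lever to transfer the local constraint \ref{item:pr:distance} to the intersection $\yotr(x)$, which is neither assumed to appear in $\mathcal{A}_{x}$ nor a priori tenuous. Verifying $\yotr(x)\in\yoofam{R}$ is the other crucial step, since \ref{item:pr:distance} is only formulated for petal indices in $\yoofam{R}$, and this verification relies on the intrinsic characterization of tenuousness furnished by Lemma \ref{lem:discom}.
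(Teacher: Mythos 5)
Your proof is correct and takes essentially the same route as the paper: part \ref{item:tr1} rests on finiteness of tenuous sets above $r$ together with closure of $\{\,S\in\yoofam{R}\mid x\in\yopetalq{S}\,\}$ under finite intersections via \ref{item:pr:cap} (the paper intersects witnesses that remove each surplus point, you pick a member of minimal cardinality above $r$ --- the same idea), and part \ref{item:tr2} applies \ref{item:pr:distance} to the set produced by part \ref{item:tr1} to force $d(x,\yopetalq{\yotr(x)})=0$ (the paper runs through the scales $2^{-i}$ and uses closedness of the piece, you use the single scale $r/2$ and Lemma \ref{lem:18:p6}). Your explicit check that $\yotr(x)\in\yoofam{R}$ is a point the paper only asserts in passing, and it is indeed needed to invoke \ref{item:pr:distance}, so including it is a welcome addition rather than a deviation.
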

\begin{proof}
First we show \ref{item:tr1}. 
Take $S_{0}\in \yoofam{R}$ such that 
$x\in \yopetalq{S_{0}}$. 
Since 
the sets
$S_{0}\cap (r, \infty)$ and 
 $\yotr(x)\cap (r, \infty)$ 
 are finite
(see Lemma \ref{lem:discom}), 
the set 
$S_{0}\cap (r, \infty)\setminus \yotr(x)\cap (r, \infty)$
consists of finite many points, say 
$p_{1}, \dots, p_{n}$. 
Since each $p_{i}$ does not belong to $\yotr(x)$, 
by the definition of $\yotr(x)$, 
we can take $S_{i}\in \yoofam{R}$ such that 
$x\in \yopetalq{S_{i}}$
and $p_{i}\not \in S_{i}$. 
Put $S=\bigcap_{i=0}^{k}S_{i}$. 
Then $S$ belongs to $\yoofam{R}$ and 
satisfies 
$S\cap(r, \infty)=\yotr(x)\cap (r, \infty)$. 
According to 
the property 
\ref{item:pr:cap}, 
the set $S$ satisfies $x\in \yopetalq{S}$. 
Thus $S$ is as desired.

Second we verify \ref{item:tr2}. 
For every $i\in \zz_{\ge 0}$, 
put $t_{i}=2^{-i}$. 
According to the statement \ref{item:tr1}, 
we can find 
a family  $\{S_{i}\}_{i\in \zz_{\ge 0}}$ of tenuous range subsets of $R$
such that
 $\bigcap_{i\in \zz_{\ge 0}}S_{i}=\yotr(x)$, and 
for each $i\in \zz_{\ge 0}$, 
we have $x\in \yopetalq{S_{i}}$ and $S_{i}\cap (t_{i}, \infty)=\yotr(x)\cap (t_{i}, \infty)$. 
Combining
the property \ref{item:pr:distance} and 
 the fact that $x\in \yopetalq{S_{i}}$, 
we obtain 
 $d(x, \yopetalq{\yotr(x)})\in S_{i}\setminus \yotr(x)$. 
Since  
$S_{i}\cap (t_{i}, \infty)=\yotr(x)\cap (t_{i}, \infty)$, 
every point $q\in S_{i}\setminus \yotr(x)$ must satisfy 
$q\le t_{i}$. 
In particular we deduce that 
 $d(x, \yopetalq{\yotr(x)})\le t_{i}$. 
Since $t_{i}\to 0$
as $i\to 0$, 
the point $x$ belongs to 
the closure of 
 $\yopetalq{\yotr(x)}$. 
 Remark that 
$\yopetalq{\yotr(x)}$ is closed in $X$
since it is complete (see property \ref{item:pr:sep}). 
Therefore we conclude that $x\in \yopetalq{\yotr(x)}$. 
\end{proof}

Let us observe  the relationship between the traces and the distances. 
\begin{lem}\label{lem:approx}
Let $R$ be an uncountable range set, 
and $(X, d)$ be an $R$-petaloid ultrametric space. 
If $S\in \yoofam{R}$
and 
$x\not \in \yopetalq{S}$, 
then we obtain 
$d(x, \yopetalq{S})=\min\{\, t\in
R
\mid 
\yotr(x)\cap (t, \infty)\yosub S\cap (t, \infty)\, \}$. 
\end{lem}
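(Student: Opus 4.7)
Set $T = \yotr(x)$, so that Lemma \ref{lem:trace} gives $x \in \yopetalq{T}$. The plan is to first identify the right-hand side of the claimed equality with $m := \max(T \setminus S)$, and then to establish the two inequalities $d(x, \yopetalq{S}) \le m$ and $d(x, \yopetalq{S}) \ge m$ separately.

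For the identification, the assumption $x \notin \yopetalq{S}$ together with $x \in \yopetalq{T}$ and Lemma \ref{lem:petalinclusion} forbids $T \yosub S$, so $T \setminus S$ is a nonempty tenuous set (Lemma \ref{lem:discom}) and hence admits a positive maximum $m$. A direct check then identifies $m$ with the minimum of $\{\, t \in R \mid T \cap (t, \infty) \yosub S \cap (t, \infty)\, \}$: at $t = m$ the inclusion holds by the very choice of $m$, while for any $t < m$ the point $m$ itself belongs to $T \cap (t, \infty)$ but not to $S$, breaking the inclusion. In particular $m$ lies in $R$ (being an element of $T$), so the minimum is attained.

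The upper bound is immediate from property \ref{item:pr:distance} applied to $x \in \yopetalq{T}$: the distance lies in $(T \setminus S) \cup \{0\}$, and completeness of $\yopetalq{S}$ (property \ref{item:pr:sep}) makes it closed in $X$, so $x \notin \yopetalq{S}$ rules out the value $0$. Hence $d(x, \yopetalq{S}) \in T \setminus S$, and therefore $d(x, \yopetalq{S}) \le m$.

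For the lower bound I argue by contradiction. Suppose $d(x, \yopetalq{S}) < m$; then Lemma \ref{lem:18:p6} produces $p \in \yopetalq{S}$ with $t := d(x, p) < m$. The crux is to introduce the auxiliary range set
\[
\hat S \;=\; \yotr(p) \cup (T \cap [0, t]),
\]
which is tenuous as a union of tenuous sets and, crucially, omits $m$: indeed $m \notin \yotr(p)$ because $\yotr(p) \yosub S$ and $m \notin S$, and $m \notin T \cap [0, t]$ because $m > t$. On one hand, Lemma \ref{lem:trace} and Lemma \ref{lem:petalinclusion} place $p \in \yopetalq{\hat S}$, so $d(x, \yopetalq{\hat S}) \le t$. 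On the other hand, property \ref{item:pr:distance} forces $d(x, \yopetalq{\hat S}) \in (T \setminus \hat S) \cup \{0\}$, and the inclusion $T \cap [0, t] \yosub \hat S$ implies that every element of $T \setminus \hat S$ strictly exceeds $t$. These two constraints can be reconciled only by $d(x, \yopetalq{\hat S}) = 0$; closedness of $\yopetalq{\hat S}$ in $X$ then yields $x \in \yopetalq{\hat S}$, whence $m \in T = \yotr(x) \yosub \hat S$, contradicting $m \notin \hat S$.

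The main obstacle is designing the witness set $\hat S$ in the contradiction step: one needs to enlarge $\yotr(p)$ by exactly the part of $T$ lying in $[0, t]$, so that property \ref{item:pr:distance} manufactures a lower bound strictly above $t$ while $p$ itself still belongs to $\yopetalq{\hat S}$. The resulting squeeze collapses $d(x, \yopetalq{\hat S})$ to zero and activates the contradiction with $m \in \yotr(x)$.
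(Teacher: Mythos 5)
Your argument is correct; every step checks out (the identification of the right-hand side with $m=\max(\yotr(x)\setminus S)$, the upper bound via \ref{item:pr:distance} and closedness of $\yopetalq{S}$, and the contradiction built on $\hat S=\yotr(p)\cup(\yotr(x)\cap[0,t])$, which indeed lies in $\yoofam{R}$, omits $m$, contains $\yotr(p)$, and forces $d(x,\yopetalq{\hat S})=0$ and hence $\yotr(x)\yosub\hat S$). It is, however, a genuinely different implementation from the paper's. The paper works directly with $u=d(x,\yopetalq{S})$: property \ref{item:pr:distance} puts $u\in\yotr(x)\setminus S$, and if some $s\in\yotr(x)\cap(u,\infty)$ escaped $S$, the single auxiliary set $A=(\yotr(x)\setminus\{s\})\cup S$ would satisfy $\yopetalq{S}\yosub\yopetalq{A}$, so $d(x,\yopetalq{A})\le u$, while \ref{item:pr:distance} forces $d(x,\yopetalq{A})=s>u$ --- an immediate contradiction, after which minimality of $u$ is automatic. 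Your route instead names the minimum in advance as $m$, invokes the nearest-point Lemma \ref{lem:18:p6}, and uses the trace of that nearest point to squeeze $d(x,\yopetalq{\hat S})$ down to $0$; this costs two extra ingredients (Lemma \ref{lem:18:p6} and the closedness of $\yopetalq{\hat S}$) but buys an explicit formula $d(x,\yopetalq{S})=\max(\yotr(x)\setminus S)$, which the paper leaves implicit. One small imprecision: $\yotr(x)\setminus S$ need not itself be tenuous (it can be infinite without containing $0$, hence neither finite nor semi-sporadic, and need not be closed), so you should not cite Lemma \ref{lem:discom} for that set; rather, apply Lemma \ref{lem:discom} to $\yotr(x)$ itself --- since $0\in S$, any element $r_{0}$ of $\yotr(x)\setminus S$ is positive and $\yotr(x)\cap[r_{0},\infty)$ is finite, which already yields the positive maximum $m$ you need, so the slip is harmless.
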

\begin{proof}
Put 
$u=d(x, \yopetalq{S})$. 
By
the property \ref{item:pr:distance}
and the assumption that  $x\not\in \yopetalq{S}$, 
it is true that 
$u\in \yotr(x)\setminus S$. 
For the sake of contradiction, 
suppose that
$\yotr(x)\cap (u, \infty)\not\yosub S\cap (u, \infty)$. 
Then 
 there exists $s\in \yotr(x)\cap (u, \infty)$ 
 such that $s\not\in S$. 
 In particular, we have $u<s$. 
Put $A=(\yotr(x)\setminus \{s\})\cup S$. 
Then $A\in \yoofam{R}$. 
From $s\in \yotr(x)$ and the definition of $\yotr(x)$, 
it follows that 
$x\not\in \yopetalq{A}$.  
Put $v=d(x, \yopetalq{A})$. 
Since 
$\yopetalq{S}\yosub \yopetalq{A}$ 
(recall $S\yosub A$ and see  Lemma \ref{lem:petalinclusion}), 
we obtain 
$v\le u$. 
The property  \ref{item:pr:distance} yields 
 $v\in \yotr(x)\setminus A(=\{s\})$, 
 and hence 
$s\le u$. 
This is a  contradiction to 
 $u<s$. 
Thus we have 
$\yotr(x)\cap (u, \infty)\yosub S\cap (u, \infty)$. 
The minimality of 
 $u$ follows from $u\in \yotr(x)\setminus S$ and 
$\yotr(x)\cap (u, \infty)\yosub S\cap (u, \infty)$. 
\end{proof}

\begin{cor}\label{cor:equal}
Let $R$ be an uncountable range set, 
and $(X, d)$ be an $R$-petaloid ultrametric space. 
If $x, y\in X$ and $w\in R$ satisfies $x\neq y$ 
and $d(x, y)\le w$, 
then we have 
$\yotr(x)\cap (w, \infty)=\yotr(y)\cap (w, \infty)$. 
\end{cor}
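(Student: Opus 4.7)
The plan is to prove the nontrivial inclusion $\yotr(x)\cap (w, \infty)\yosub \yotr(y)\cap (w, \infty)$; the reverse inclusion then follows by swapping the roles of $x$ and $y$, which is legitimate because the hypothesis is symmetric in $x, y$. I would argue by contradiction: assume there exists $s\in \yotr(x)\cap (w,\infty)$ with $s\notin \yotr(y)$.

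The first step is to expose a petal piece containing $y$ that witnesses the truncation of $\yotr(y)$ above $w$. Applying Lemma \ref{lem:trace}\ref{item:tr1} to $y$ with the parameter $r=w$, I obtain some $S\in \yoofam{R}$ with $y\in \yopetalq{S}$ and $\yotr(y)\cap (w,\infty)=S\cap (w,\infty)$. Since $s>w$ and $s\notin \yotr(y)$, the equality forces $s\notin S$.

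The second step splits into two cases depending on whether $x$ lies in $\yopetalq{S}$. If $x\in \yopetalq{S}$, then by the very definition of $\yotr(x)$ we have $\yotr(x)\yosub S$, so $s\in S$, contradicting $s\notin S$. If $x\notin \yopetalq{S}$, then since $y\in \yopetalq{S}$ the triangle inequality gives $u:=d(x, \yopetalq{S})\le d(x,y)\le w<s$. Lemma \ref{lem:approx} identifies $u$ as the minimum of those $t\in R$ for which $\yotr(x)\cap (t,\infty)\yosub S\cap (t,\infty)$; in particular this inclusion holds at $t=u$. Because $s>u$ and $s\in \yotr(x)$, we conclude $s\in S$, again a contradiction.

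The proof is essentially a bookkeeping argument, so there is no single hard step; the only subtlety is invoking Lemma \ref{lem:trace}\ref{item:tr1} with the correct threshold so that the pieces above $w$ of $\yotr(y)$ and of the chosen $S$ coincide, which is what lets us transport the hypothetical witness $s$ into a violation detectable via Lemma \ref{lem:approx}.
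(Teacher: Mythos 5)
Your proof is correct and rests on the same key ingredient as the paper's: a petal piece containing $y$ lies within distance $w$ of $x$, and Lemma \ref{lem:approx} then forces $\yotr(x)$ to be contained in that piece's value set above the distance, with the reverse inclusion by symmetry. The only (cosmetic) difference is that you select the piece via Lemma \ref{lem:trace}\ref{item:tr1} and argue by contradiction with a witness $s$, whereas the paper simply takes $S=\yotr(y)$, uses $y\in\yopetalq{\yotr(y)}$ (Lemma \ref{lem:trace}\ref{item:tr2}) to get $d(x,\yopetalq{\yotr(y)})\le w$, and applies Lemma \ref{lem:approx} directly; also note that the bound $d(x,\yopetalq{S})\le d(x,y)$ needs no triangle inequality, only the definition of the distance to a set.
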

\begin{proof}
The assumption $d(x, y)\le w$ shows that 
$d(x, \yopetalq{\yotr(y)})\le w$. 
Then Lemma \ref{lem:approx} implies 
$\yotr(x)\cap (w, \infty)\yosub \yotr(y)\cap (w, \infty)$. 
Replacing the role of $x$ with that of $y$, 
we also have 
$\yotr(y)\cap (w, \infty)\yosub \yotr(x)\cap (w, \infty)$.
This completes the proof. 
\end{proof}

\begin{lem}\label{lem:unionrange}
Let $\{S_{i}\}_{i\in \zz_{\ge 0}}$ be a
family of tenuous range sets. 
If there exists a sequence 
$\{t_{i}\}_{i\in \zz_{\ge 0}}$ in $(0, \infty)$ such that 
for all $i\in \zz_{\ge 0}$, 
we have
$t_{i+1}\le t_{i}$ and 
$S_{i}\cap (t_{i}, \infty)=S_{i+1}\cap (t_{i}, \infty)$,
and  
$t_{i}\to 0$ as $i\to \infty$, 
then the union 
$\bigcup_{i\in \zz_{\ge 0}} S_{i}$ is 
also a 
tenuous range set. 
\end{lem}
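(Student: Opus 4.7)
The plan is to verify the two characterizing conditions for tenuousness given in Lemma \ref{lem:discom}: namely, that $U:=\bigcup_{i\in\zz_{\ge 0}}S_{i}$ is closed in $[0,\infty)$ with the Euclidean topology, and that $U\cap[r,\infty)$ is finite for every $r\in(0,\infty)$. Since each $S_i$ contains $0$, the set $U$ is automatically a range set.

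The key step is the finiteness condition, and for this I would exploit the monotone stabilization afforded by the hypothesis. Fix $r>0$. Since $t_i\to 0$, choose $N\in\zz_{\ge 0}$ with $t_N<r$; then for every $i\ge N$ we have $t_i\le t_N<r$, so $(r,\infty)\yosub(t_i,\infty)$. A short induction on $i\ge N$ using the identity $S_{i+1}\cap(t_i,\infty)=S_i\cap(t_i,\infty)$ (intersected further with $(r,\infty)$) shows that $S_i\cap(r,\infty)=S_N\cap(r,\infty)$ for all $i\ge N$. Hence
\[
U\cap(r,\infty)=\bigcup_{i<N}\bigl(S_i\cap(r,\infty)\bigr)\,\cup\,\bigl(S_N\cap(r,\infty)\bigr).
\]
Each summand on the right is finite because each $S_i$ is tenuous (apply Lemma \ref{lem:discom} again), and the union is over finitely many indices, so $U\cap(r,\infty)$ — and a fortiori $U\cap[r,\infty)$ — is finite.

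For closedness, suppose $x\in[0,\infty)$ is an accumulation point of $U$. If $x>0$, then eventually the approximating points lie in $U\cap(x/2,\infty)$, which we have just shown to be finite, forcing $x$ itself to lie in this set, and in particular in $U$. If $x=0$, then $x\in U$ since $0\in S_0\yosub U$. Thus $U$ is closed in $[0,\infty)$. Combining this with the previous paragraph and invoking Lemma \ref{lem:discom} concludes that $U$ is tenuous.

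I do not anticipate a genuine obstacle: the whole argument is essentially the observation that the compatibility condition propagates finite data above each level $t_i$ and the sequence $t_i\to 0$ allows one to reach every positive threshold $r$. The only mildly delicate point is to notice that the identity $S_{i+1}\cap(t_i,\infty)=S_i\cap(t_i,\infty)$ must be iterated (via the induction above) rather than applied once, because a priori $r$ need not equal any $t_i$; the monotonicity $t_{i+1}\le t_i$ is precisely what makes this iteration consistent.
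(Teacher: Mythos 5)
Your proposal is correct and follows essentially the same route as the paper's proof: fix $r>0$, pick $N$ with $t_N<r$, stabilize $S_i\cap(r,\infty)$ by induction, deduce that $U\cap[r,\infty)$ is a finite union of finite sets, and then obtain closedness (and hence tenuousness) via Lemma \ref{lem:discom}. The only difference is cosmetic: the paper works directly with $[r,\infty)$ and states the closedness step more tersely, while you spell out the accumulation-point argument.
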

\begin{proof}
Put $S=\bigcup_{i\in \zz_{\ge 0}} S_{i}$. 
 For every $r\in (0, \infty)$, 
 take $N\in \zz_{\ge 0}$ with 
 $t_{N}<r$ and $t_{N+1}<t_{N}$. 
In this setting, 
by induction, we notice that 
if $n\in \zz_{\ge 0}$ satisfies  $n\ge N$, 
then $S_{n}\cap [r, \infty)=S_{N}\cap [r, \infty)$. 
Thus we obtain 
$S\cap [r, \infty)= 
\bigcup_{i=0}^{N}(S_{i}\cap [r, \infty))$. 
Due to
Lemma \ref{lem:discom},  each  $S_{i}\cap [r, \infty)$ is finite, 
and hence so is $S\cap [r, \infty)$. 
By the properties  that $0\in S$ and  $S_{i}\cap [r, \infty)$ is finite for all $r\in (0, \infty)$, 
we see that 
the set  $S$ is closed in $[0, \infty)$
with respect to the Euclidean topology. 
Using 
Lemma \ref{lem:discom} again,  
we conclude that the set $S$ is tenuous. 
\end{proof}

For a metric space $(X, d)$, 
for $\epsilon\in (0, \infty)$, 
 a subset $A$ of $X$ is 
 said to be an \emph{$\epsilon$-net of $X$} if 
 $A$ is finite and satisfies 
 that for all $x\in X$, there exists 
 $a\in A$ with $d(x, a)\le \epsilon$. 
 We say that a metric space $(X, d)$ is 
 \emph{totally bounded} if 
 for every $\epsilon \in (0, \infty)$, 
 there exists an $\epsilon$-net of $X$. 

\begin{prop}\label{prop:compacttrace}
Let $R$ be an uncountable range set, 
and $(X, d)$ be an $R$-petaloid ultrametric space. 
If $K$ is a totally bounded  subset of $X$, 
then there exists $S\in \yoofam{R}$ such that 
$K\yosub \yopetalq{S}$. 
\end{prop}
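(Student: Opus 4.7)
The plan is to let $S = \bigcup_{x \in K} \yotr(x)$ and show that $S \in \yoofam{R}$; then, because each $x \in K$ lies in $\yopetalq{\yotr(x)}$ by Lemma \ref{lem:trace}\ref{item:tr2}, and because $\yotr(x) \yosub S$ gives $\yopetalq{\yotr(x)} \yosub \yopetalq{S}$ by Lemma \ref{lem:petalinclusion}, we obtain $K \yosub \yopetalq{S}$ as desired. The work therefore reduces entirely to verifying that $S$ is a tenuous range subset of $R$.

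First, $S$ is a range subset of $R$: every $\yotr(x)$ is a range subset of $R$, so $0 \in S \yosub R$ (the case $K = \emptyset$ is trivial by taking $S = \{0\}$). The heart of the argument is checking the two conditions of Lemma \ref{lem:discom}, namely that $S \cap [r, \infty)$ is finite for each $r > 0$ and that $S$ is closed in $[0, \infty)$ in the Euclidean topology. The key tool will be Corollary \ref{cor:equal}, which says that points at distance $\le w$ have identical traces above $w$.

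For finiteness, fix $r > 0$ and take an $(r/2)$-net $A$ of $K$ given by total boundedness. For each $x \in K$, pick $a \in A$ with $d(x, a) \le r/2$. If $x \ne a$, Corollary \ref{cor:equal} yields $\yotr(x) \cap (r/2, \infty) = \yotr(a) \cap (r/2, \infty)$, and the case $x = a$ is trivial. Hence
\[
S \cap [r, \infty) \yosub \bigcup_{a \in A} \yotr(a) \cap (r/2, \infty),
\]
which is a finite union of finite sets (each $\yotr(a)$ is tenuous, so Lemma \ref{lem:discom} applies), hence finite. For closedness, suppose $s_n \to s$ with $s_n \in S$. If $s > 0$, then eventually $s_n \in S \cap [s/2, \infty)$, a finite set, so $s \in S$. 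If $s = 0$, then $s = 0 \in S$ automatically. Thus $S$ is closed, and Lemma \ref{lem:discom} gives $S \in \yoofam{R}$.

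The main obstacle I expect is precisely the verification that $S \cap [r, \infty)$ is finite, since a priori $K$ is uncountable and each $\yotr(x)$ contributes possibly distinct elements above $r$. Total boundedness enters only through this step, and it is Corollary \ref{cor:equal} that forces the traces above scale $r/2$ to be controlled by a single finite net; without it, one could not reduce to a finite union. Once this is in hand, everything else is a direct application of the petaloid axioms and the lemmas from this section.
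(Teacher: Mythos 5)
Your proof is correct, and it rests on the same engine as the paper's: total boundedness plus Corollary \ref{cor:equal} to control all traces above a given scale by the traces of a finite net. The assembly differs, though. The paper works with a countable set $\bigcup_n L_n$ of net points (nested $2^{-n}$-nets), forms $T_n=\bigcup_{x\in L_n}\yotr(x)$, shows the $T_n$ stabilize above $2^{-n}$, takes $S=\bigcup_n T_n$, and then must finish with a density-plus-closedness step: only the net points land in $\yopetalq{S}$ directly, and $K\yosub\yopetalq{S}$ follows because $\yopetalq{S}$ is complete. You instead take $S=\bigcup_{x\in K}\yotr(x)$ over \emph{all} of $K$, so the inclusion $K\yosub\yopetalq{S}$ is immediate from Lemma \ref{lem:trace}\,(2) and Lemma \ref{lem:petalinclusion}, and the entire burden shifts to showing that this a priori uncountable union is tenuous --- which your net argument at each scale $r$ handles cleanly. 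Your route thus avoids both the stabilization bookkeeping (Lemma \ref{lem:unionrange}) and the appeal to completeness of the $S$-piece, at the cost of verifying Lemma \ref{lem:discom} for a large union directly; both are perfectly sound trades. One small point: Corollary \ref{cor:equal} is stated for $w\in R$, and you apply it with $w=r/2$, which need not lie in $R$; the paper does exactly the same with $w=2^{-n}$, and the corollary's proof (via Lemma \ref{lem:approx}) in fact yields the conclusion for any real $w\ge d(x,y)$, so this is harmless, but it would be worth a remark if you wrote this up.
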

\begin{proof}
For each $n\in \zz_{\ge 0}$, 
let $L_{n}$ be a
$(2^{-n})$-net of $K$. 
We may assume that 
$L_{n}\yosub L_{n+1}$ for all 
$n\in \zz_{\ge 0}$. 
Put $T_{n}=\bigcup_{x\in L_{n}}\yotr(x)$. 
Then, since $L_{n}$ is finite, we have  $T_{n}\in \yoofam{R}$ and 
$T_{n}\yosub T_{n+1}$ for all $n\in \zz_{\ge 0}$. 
Since $L_{n}$ is a $(2^{-n})$-net of $L_{n+1}$, 
for each $x\in L_{n+1}$, there exists $a\in L_{n}$ with 
$d(x, a)\le 2^{-n}$. 
According to 
Corollary \ref{cor:equal}, 
we have 
$\yotr(x)\cap (2^{-n}, \infty)=
\yotr(a)\cap (2^{-n}, \infty)$. 
Thus $T_{n}\cap (2^{-n}, \infty)
=T_{n+1}\cap (2^{-n}, \infty)$
for all $n\in \zz_{\ge 0}$. 
Put $S=\bigcup_{n\in \zz_{\ge 0}}T_{n}$. 
Therefore Lemma 
\ref{lem:discom} shows that 
the set $S$ 
belongs to $\yoofam{R}$. 
From  the definition of the traces, 
it follows that 
$\bigcup_{n\in \zz_{\ge 0}}L_{n}\yosub \yopetalq{S}$. 
Since $\bigcup_{n\in \zz_{\ge 0}}L_{n}$ is dense in 
$K$ and 
$\yopetal{X}{S}$ is complete (see \ref{item:pr:sep}),  
we   conclude that 
$K\yosub \yopetalq{S}$. 
\end{proof}

We now prove that all petaloid spaces 
are complete. 
\begin{prop}\label{prop:petalcomp}
Let $R$ be an uncountable range set, 
and $(X, d)$ be an $R$-petaloid ultrametric space. 
Then $(X, d)$ is complete. 
\end{prop}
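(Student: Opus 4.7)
The plan is to reduce completeness of $(X,d)$ to completeness of one of its pieces $\yopetalq{S}$, by invoking Proposition \ref{prop:compacttrace}. Concretely, given a Cauchy sequence $\{x_{n}\}_{n\in \zz_{\ge 0}}$ in $X$, I would first observe that the set $K=\{\, x_{n}\mid n\in \zz_{\ge 0}\, \}$ is totally bounded: for any $\epsilon\in (0,\infty)$, choosing $N$ so that $d(x_{n}, x_{N})\le \epsilon$ for all $n\ge N$, the finite set $\{x_{0}, x_{1}, \dots, x_{N}\}$ is an $\epsilon$-net of $K$.

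Next I would apply Proposition \ref{prop:compacttrace} to $K$ to obtain some $S\in \yoofam{R}$ with $K\yosub \yopetalq{S}$. At this point all $x_{n}$ live inside a single piece $\yopetalq{S}$, and by property \ref{item:pr:sep} the subspace $(\yopetalq{S}, d)$ is isometric to the $S$-Urysohn universal ultrametric space, which is complete by definition. Therefore the Cauchy sequence $\{x_{n}\}$ converges in $\yopetalq{S}$, and hence in $X$.

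There is really no serious obstacle here, since the heavy lifting has already been carried out in Proposition \ref{prop:compacttrace}: the delicate step was organizing the traces of an $\epsilon$-net into an increasing family of tenuous sets whose union remains tenuous (via Lemma \ref{lem:unionrange}), which is precisely what packages an arbitrary totally bounded subset into a single piece of the petal. Once that is available, the completeness of each $\yopetalq{S}$ coming from \ref{item:pr:sep} closes the argument immediately.
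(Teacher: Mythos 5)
Your argument is correct and is essentially identical to the paper's own proof: both reduce to Proposition \ref{prop:compacttrace} by observing that the underlying set of a Cauchy sequence is totally bounded, place it inside a single piece $\yopetalq{S}$, and conclude from the completeness of the $S$-Urysohn universal ultrametric space guaranteed by \ref{item:pr:sep}. Your explicit verification that a Cauchy sequence is totally bounded is a small addition the paper leaves implicit, but the route is the same.
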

\begin{proof}
Let $\{x_{i}\}_{i\in \zz_{\ge 0}}$ be a 
Cauchy sequence in $(X, d)$. 
Then the set 
$K=\{\, x_{i}\mid i\in \zz_{\ge 0}\, \}$ is 
totally bounded, 
and  Proposition \ref{prop:compacttrace}
enables us to take $S\in \yoofam{R}$ such that 
$K\yosub \yopetalq{S}$. 
Since $\yopetalq{S}$ is complete, 
 the sequence 
$\{x_{i}\}_{i\in \zz_{\ge 0}}$ has a limit. 
Thus $(X, d)$ itself  is 
complete. 
\end{proof}

Let us confirm that 
all $R$-petaloid spaces are  injective  for all finite $R$-ultrametric spaces. 
\begin{prop}\label{prop:fininj}
Let $R$ be an uncountable range set. 
Then all 
 $R$-petaloid spaces are
$\youfin(R)$-injective. 
\end{prop}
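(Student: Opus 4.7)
The plan is to reduce the injectivity of $X$ to the injectivity of a single petal $\yopetalq{S}$, which is already guaranteed by property \ref{item:pr:sep}. Given finite $R$-ultrametric spaces $(A, a)$ and $(B, b)$ together with isometric embeddings $\phi\colon A\to B$ and $\psi\colon A\to X$, I would first collect the finitely many distances realized in $(B, b)$ together with $0$ into a finite range subset $S_{0}\yosub R$; since $S_{0}$ is finite, it is tenuous, so $S_{0}\in \yoofam{R}$, and by construction $(B, b)$ is $S_{0}$-valued.

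Next I would control the image $\psi(A)$. Since $\psi(A)$ is finite, it is totally bounded in $(X, d)$, so Proposition \ref{prop:compacttrace} supplies some $T\in \yoofam{R}$ with $\psi(A)\yosub \yopetalq{T}$. Setting $S=S_{0}\cup T$, the union of two tenuous range sets is again tenuous by Lemma \ref{lem:noref}, hence $S\in \yoofam{R}$. Lemma \ref{lem:petalinclusion} gives $\yopetalq{T}\yosub \yopetalq{S}$, and therefore $\psi(A)\yosub \yopetalq{S}$. At the same time, $(B, b)$ is still an $S$-ultrametric space because $S_{0}\yosub S$.

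With this setup, property \ref{item:pr:sep} tells us that $(\yopetalq{S}, d)$ is the $S$-Urysohn universal ultrametric space, and in particular it is $\youfin(S)$-injective. Since $(A, a), (B, b)\in \youfin(S)$, the embeddings $\phi\colon A\to B$ and $\psi\colon A\to \yopetalq{S}$ admit an extension $\theta\colon B\to \yopetalq{S}$ with $\theta\circ \phi=\psi$. Composing with the inclusion $\yopetalq{S}\hookrightarrow X$ yields the required isometric embedding into $X$.

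The only subtle step is step two, where one must absorb the image $\psi(A)$ into a petal indexed by a tenuous set that \emph{also} carries the internal distances of $B$; this is exactly where Proposition \ref{prop:compacttrace} and the stability of $\yoofam{R}$ under finite unions (Lemma \ref{lem:noref}) intervene. Everything else is a bookkeeping reduction to the injectivity of a single countable petal, so I do not expect a genuine obstacle beyond organizing these lemmas.
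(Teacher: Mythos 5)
Your argument is correct and follows essentially the same strategy as the paper: absorb both the distance set of $B$ and the image of $A$ into a single tenuous $S\in\yoofam{R}$, then invoke the $\youfin(S)$-injectivity of $\yopetalq{S}$ from \ref{item:pr:sep} and compose with the inclusion $\yopetalq{S}\hookrightarrow X$. The only cosmetic difference is that you route the containment $\psi(A)\yosub\yopetalq{T}$ through Proposition \ref{prop:compacttrace}, whereas the paper takes $S=e(B\times B)\cup\bigcup_{x\in A}\yotr(\phi(x))$ directly using the traces, which is what Proposition \ref{prop:compacttrace} amounts to for a finite set anyway.
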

\begin{proof}
Let $(X, d)$ be an arbitrary $R$-petaloid space. 
Take $(A, e)$ and $(B, e)$ be  finite metric spaces in 
$\youfin(R)$ with 
$A\yosub B$.
Take an isometric embedding 
$\phi\colon A\to X$. 
Put $S=e(B\times B)\cup \bigcup_{x\in A}\yotr(\phi(x))$. 
Since $B$ is finite,  
we notice that $S\in \yoofam{R}$ and 
$(B, e)$ belongs to $\youfin(S)$. 
By the definition of the trace, 
we also have $\phi(A)\yosub\yopetalq{S}$. 
The property  \ref{item:pr:sep} implies that
$\yopetalq{S}$ is 
$\youfin(S)$-injective, 
and hence there exists an isometric embedding 
$F\colon B\to \yopetalq{S}$ such that $F|_{B}=\phi$. 
Therefore 
we conclude that $(X, d)$ is $\youfin(R)$-injective. 
\end{proof}

Consequently, we obtain the next proposition. 
\begin{prop}\label{prop:universal}
Let $R$ be an uncountable range set, 
and $(X, d)$ be an $R$-petaloid ultrametric space. 
Then every separable $R$-valued  ultrametric space 
can be isometrically embedded into 
$(X, d)$. 
\end{prop}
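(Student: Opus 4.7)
The plan is to combine the $\youfin(R)$-injectivity of $X$ (Proposition \ref{prop:fininj}) with its completeness (Proposition \ref{prop:petalcomp}) via a standard enumerate-and-extend argument, of the same flavor as the proof that every separable metric space embeds into the classical Urysohn space.

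Let $(Y, \rho)$ be a separable $R$-ultrametric space. First I would fix a countable dense subset $D=\{y_{n}\}_{n\in \zz_{\ge 0}}$ of $Y$ (if $Y$ is empty there is nothing to prove). I would then construct isometric embeddings $\phi_{n}\colon \{y_{0},\dots, y_{n}\}\to X$ inductively: let $\phi_{0}$ send $y_{0}$ to an arbitrary point of $X$, and given $\phi_{n}$, note that the pair $A=\{y_{0},\dots, y_{n}\}\yosub B=\{y_{0},\dots, y_{n+1}\}$ (with the restriction of $\rho$) lies in $\youfin(R)$ because $\rho$ is $R$-valued and the sets are finite; then Proposition \ref{prop:fininj} produces an isometric embedding $\phi_{n+1}\colon B\to X$ extending $\phi_{n}$. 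The union $\phi=\bigcup_{n}\phi_{n}\colon D\to X$ is then a well-defined isometric embedding.

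Second, I would promote $\phi$ to an isometric embedding $\Phi\colon Y\to X$ using completeness. For every $y\in Y$ choose a sequence $\{d_{k}\}_{k\in \zz_{\ge 0}}$ in $D$ with $d_{k}\to y$; the image $\{\phi(d_{k})\}$ is Cauchy in $X$ because $\phi$ is an isometry, hence converges by Proposition \ref{prop:petalcomp}. Define $\Phi(y)$ to be that limit. A routine check shows $\Phi(y)$ does not depend on the choice of approximating sequence, that $\Phi|_{D}=\phi$, and that $\Phi$ preserves distances (since for approximating sequences $d_{k}\to y$ and $d_{k}'\to y'$ one has $d(\Phi(y),\Phi(y'))=\lim_{k}d(\phi(d_{k}),\phi(d_{k}'))=\lim_{k}\rho(d_{k},d_{k}')=\rho(y,y')$).

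There is no serious obstacle here: the inductive step is handled entirely by Proposition \ref{prop:fininj}, and the passage from $D$ to $Y$ is the standard extension of a uniformly continuous map into a complete space. The only point worth highlighting is that at every stage of the induction the ambient finite metric spaces really belong to $\youfin(R)$ because $\rho$ takes values in $R$, so the hypothesis of $\youfin(R)$-injectivity is genuinely applicable.
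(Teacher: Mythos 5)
Your argument is correct and is exactly what the paper intends: its one-line proof invokes the $\youfin(R)$-injectivity (Proposition \ref{prop:fininj}) and completeness (Proposition \ref{prop:petalcomp}) "using induction," which is precisely your enumerate-and-extend construction on a countable dense subset followed by the standard completion step. No gaps; you have simply written out the details the paper leaves implicit.
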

\begin{proof}
The $\youfin(R)$-injectivity
and the completeness  of $(X, d)$  
imply the proposition 
using induction. 
\end{proof}

For range sets $R$ and $S$ with $S\yosub R$, 
we denote by 
$\yoofamqq{R}{S}$ the set of all $T\in \yoofam{R}$
such that $S\yosub T$ and  $T\setminus S$ is finite. 
The following lemma plays an important role in 
the proofs of our main theorems. 
The proof is deduced from 
Lemma \ref{lem:approx} and 
the property \ref{item:pr:cup}. 
\begin{lem}\label{lem:dense}
If $R$ is  an uncountable range set, 
$S\in \yoofam{R}$, 
and $(X, d)$ is  an $R$-petaloid ultrametric space, 
then $\bigcup_{T\in \yoofamqq{R}{S}}\yopetalq{T}$ is dense in $X$. 
\end{lem}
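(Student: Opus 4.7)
The plan is to show that every $x\in X$ is approximated arbitrarily well by points lying in some $\yopetalq{T}$ with $T\in \yoofamqq{R}{S}$. Fix $x\in X$ and $\epsilon\in (0,\infty)$; the goal is to produce $T\in \yoofamqq{R}{S}$ and $p\in \yopetalq{T}$ with $d(x,p)<\epsilon$.

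First, I would select $T$ by truncating the trace of $x$. By Lemma \ref{lem:trace}\ref{item:tr2} we have $x\in \yopetalq{\yotr(x)}$, and since $\yotr(x)\in \yoofam{R}$ is tenuous, Lemma \ref{lem:discom} ensures that $\yotr(x)\cap [\epsilon,\infty)$ is finite. Define
\[
T = S \cup \bigl(\yotr(x)\cap [\epsilon,\infty)\bigr).
\]
Then $T$ is a range set containing $S$, it is tenuous as the union of two tenuous sets, and $T\setminus S \subseteq \yotr(x)\cap [\epsilon,\infty)$ is finite; hence $T\in \yoofamqq{R}{S}$.

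Second, I would bound $d(x,\yopetalq{T})$. If $x\in \yopetalq{T}$ there is nothing to prove, so assume $x\notin \yopetalq{T}$. By Lemma \ref{lem:approx},
\[
d(x,\yopetalq{T}) = \min\bigl\{\,t\in R \mid \yotr(x)\cap (t,\infty)\subseteq T\cap (t,\infty)\,\bigr\},
\]
and moreover (from the same lemma) this minimum lies in $\yotr(x)\setminus T$. By the choice of $T$, every element of $\yotr(x)\cap [\epsilon,\infty)$ belongs to $T$, so $\yotr(x)\setminus T\subseteq (0,\epsilon)$, giving $d(x,\yopetalq{T})<\epsilon$. Finally, Lemma \ref{lem:18:p6} produces a point $p\in \yopetalq{T}$ realizing this distance. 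Letting $\epsilon\to 0$ shows that $x$ lies in the closure of $\bigcup_{T\in \yoofamqq{R}{S}}\yopetalq{T}$, which proves density.

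The only real subtlety is choosing $T$ to balance two competing requirements: it must differ from $S$ in only finitely many points (to belong to $\yoofamqq{R}{S}$), yet it must contain enough of $\yotr(x)$ so that, via the formula in Lemma \ref{lem:approx}, the resulting distance $d(x,\yopetalq{T})$ is forced below $\epsilon$. Truncating $\yotr(x)$ at level $\epsilon$ and adjoining it to $S$ is exactly the construction that reconciles both constraints, and the rest is a direct application of the preliminaries.
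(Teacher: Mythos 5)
Your proof is correct and essentially the paper's own argument: the paper uses the same truncated-trace set $T_{n}=S\cup(\yotr(x)\cap[2^{-n},\infty))$ and gets $d(x,\yopetalq{T_{n}})\le 2^{-n}$ by applying property \ref{item:pr:distance} directly (via $x\in\yopetalq{\yotr(x)}$), whereas you reach the same bound through Lemma \ref{lem:approx}, which is itself a consequence of \ref{item:pr:distance}. The appeal to Lemma \ref{lem:18:p6} is harmless but unnecessary, since $d(x,\yopetalq{T})<\epsilon$ already provides a point of $\yopetalq{T}$ within $\epsilon$ of $x$.
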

\begin{proof}
Take $x\in X$. 
For every $n\in \zz_{\ge 0}$,  we put 
$T_{n}=S\cup (\yotr(x)\cap [2^{-n}, \infty))$. 
Then $T_{n}\in\yoofamqq{R}{S}$. 
From  the property \ref{item:pr:distance}, 
it follows that 
$d(x, \yopetalq{T_{n}})\le 2^{-n}$
for all $n\in \zz_{\ge 0}$. 
Thus, the set  $\bigcup_{T\in \yoofamqq{R}{S}}\yopetalq{T}$ is 
dense in $X$. 
\end{proof}

\section{Proofs of uniqueness and homogeneity}\label{sec:uandh}
The following lemma is  
a generalization of 
the injectivity. 
\begin{lem}\label{lem:extpt}
Let $R$ be an uncountable range set,  and 
$(X, d)$ and $(Y, e)$ be  $R$-petaloid ultrametric spaces. 
Let  
$k\in \zz_{\ge 0}$, 
$T_{0}, \dots, T_{k}\in \yoofam{R}$ and 
$S\in \yoofam{R}$ such that 
$\bigcup_{i=0}^{k}T_{i}\yosub S$. 
Let 
$A\sqcup \{\omega\}$ be a finite subset  of 
$\yopetal{X}{S}$ and 
$B$ be a finite subset of 
$\yopetal{Y}{S}$. 
Put  $G=\bigcup_{i=0}^{k}\yopetal{X}{T_{i}}$ and 
$H=\bigcup_{i=0}^{k}\yopetal{Y}{T_{i}}$. 
If  $f\colon G\sqcup A\to H\cup B$ is  an isometric 
bijection 
such that $f(G)=H$ and $f(A)=B$, 
then there exists 
$\theta\in \yopetal{Y}{S}$ 
for which
$d(f(x), \theta)=d(x, \omega)$ for all 
$x\in G\cup A$. 
Namely, 
we obtain an isometric bijection 
$F\colon G\cup A\cup \{\omega\}\to H\cup B\cup \{\theta\} $ such that $F|_{G\cup A}=f$. 
\end{lem}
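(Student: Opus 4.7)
The plan is to reduce the lemma to a single application of the finite injectivity of the $S$-Urysohn universal ultrametric space $\yopetal{Y}{S}$ (property \ref{item:pr:sep}), after enlarging the finite test set $A$ by adding, for each piece $\yopetal{X}{T_{i}}$, a nearest point to $\omega$.

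If $\omega \in G$ the conclusion is trivial (take $\theta = f(\omega)$), so one may assume $\omega \notin \yopetal{X}{T_{i}}$ for every $i$. By Lemma~\ref{lem:18:p6} together with the property \ref{item:pr:distance}, for each $i\in \{0,\dots,k\}$ I would pick $p_{i} \in \yopetal{X}{T_{i}}$ realizing
\[
r_{i} := d(\omega, p_{i}) = d(\omega, \yopetal{X}{T_{i}}) \in S \setminus T_{i};
\]
in particular $r_{i} > 0$. Put $A' = A \cup \{p_{0},\dots,p_{k}\}$. Then $A' \yosub \yopetal{X}{S}$ (Lemma~\ref{lem:petalinclusion}), $f(A') \yosub H\cup B \yosub \yopetal{Y}{S}$, so $f|_{A'}$ is an isometric embedding of $A'$ into $\yopetal{Y}{S}$. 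Since $A' \cup \{\omega\}$ is a finite $S$-valued ultrametric space and $\yopetal{Y}{S}$ is $\youfin(S)$-injective, there exists $\theta \in \yopetal{Y}{S}$ with $e(f(a'), \theta) = d(a', \omega)$ for every $a' \in A'$.

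It remains to check that $e(f(x), \theta) = d(x, \omega)$ for all $x \in G \cup A$. The case $x \in A$ is immediate. For $x \in \yopetal{X}{T_{i}} \yosub G$, the crucial observation is that $d(x, p_{i}) \in T_{i}$ (the subspace $\yopetal{X}{T_{i}}$ is $T_{i}$-valued by \ref{item:pr:sep}), whereas $r_{i} \in S \setminus T_{i}$; therefore $d(x, p_{i}) \ne r_{i}$. The ultrametric isoceles principle then gives $d(x, \omega) = d(x, p_{i}) \vee r_{i}$ in $X$, and on the $Y$ side the distances $e(f(x), f(p_{i})) = d(x, p_{i})$ and $e(f(p_{i}), \theta) = r_{i}$ are likewise strictly distinct, so the same principle yields $e(f(x), \theta) = d(x, p_{i}) \vee r_{i} = d(x, \omega)$. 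Extending $f$ by $F(\omega) := \theta$ then produces the required isometric bijection; $\theta$ lies outside $H \cup B$ because $e(f(x), \theta) = d(x, \omega) > 0$ for every $x \in G \cup A$.

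The only genuine obstacle is the potentially degenerate subcase $d(x, p_{i}) = r_{i}$, in which the isoceles trick collapses. It is precisely property \ref{item:pr:distance} that rules this subcase out, by forcing $r_{i}$ into $S \setminus T_{i}$ while the intrinsic distance $d(x, p_{i})$ within the $T_{i}$-valued piece $\yopetal{X}{T_{i}}$ is constrained to $T_{i}$.
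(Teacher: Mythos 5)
Your proposal is correct and takes essentially the same route as the paper: adjoin the nearest points $p_{i}$ supplied by Lemma \ref{lem:18:p6}, apply the finite injectivity of $\yopetal{Y}{S}$ (property \ref{item:pr:sep}) to $A\cup\{p_{0},\dots,p_{k}\}\cup\{\omega\}$, and transfer the distances to all of $G$ using \ref{item:pr:distance} together with the ultrametric isosceles principle. The only differences are cosmetic: the paper writes the last step as a two-case comparison instead of the ``maximum of two distinct values'' formulation, and it leaves implicit the observation (which you make explicit) that $d(x,p_{i})\in T_{i}$ while $d(\omega,p_{i})\notin T_{i}$ and that $\theta\notin H\cup B$.
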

\begin{proof}
If $\omega\in G$, then putting  $\theta=f(\omega)$ proves the lemma. 
We may assume that $\omega\not \in G$. 
For each $i\in \{0, \dots, k\}$, 
Lemma \ref{lem:18:p6}
enables us to 
take $p_{i}\in \yopetal{X}{T_{i}}$ such that 
$d(\omega, \yopetal{X}{T_{i}})=d(\omega, p_{i})$. 
Put
$P=\{p_{0}, \dots, p_{k}\}$, 
$Q=\{f(p_{0}), \dots, f(p_{k})\}$,  and 
$\phi=f|_{A\cup P}$. 
Then $\phi\colon A\cup P\to \yopetal{Y}{S}$ is an 
isometric embedding. 
Due to  the property \ref{item:pr:sep} for $(Y, e)$,  we can apply 
the $\youfin(R)$-injectivity of $\yopetal{Y}{S}$ to 
$\phi$, 
$A\cup P$, and  $(A\cup P)\cup\{\omega\}$.
Then we obtain 
 $\theta\in \yopetal{Y}{S}$ such that:
 \begin{enumerate}[label=\textup{(p)}]
 \item\label{item:ppp}
 We have 
$e(f(x), \theta)=d(x, \omega)$
for all $x\in P\cup A$. 
\end{enumerate}

It remains to   prove the following statement. 
\begin{enumerate}[label=\textup{(g)}]
\item\label{item:qqq}
For every 
 $x\in G\cup A$, 
we have
$e(f(x), \theta)=d(x, \omega)$. 

\end{enumerate}
Take an arbitrary point  $x\in G\cup A$. 
If $x\in A\cup P$, then 
the fact  
\ref{item:ppp} means that 
$e(f(x), \theta)=d(x, \omega)$. 
If $x\not \in A\cup P$, then 
we can take $j\in \{0, \dots, k\}$ such that $x\in \yopetal{X}{T_{j}}$. 
Put $c=d(\omega, \yopetal{X}{T_{j}})(=d(\omega, p_{j}))$. 
Then the fact \ref{item:ppp} implies  $e(\theta,f(p_{j}))=c$. 
The assumption that $\omega\not \in G$ shows 
 $c>0$. 
The property \ref{item:pr:distance} implies  
that
$c\not \in T_{j}$. 
In particular, we obtain 
 $e(f(x), f(p_{j}))\neq c$. 
 Recall that $e(f(x), f(p_{j}))=d(x, p_{j})$.  
 We divide the proof of the statement \ref{item:qqq} into two parts. 

 \yocase{1}{$e(f(x), f(p_{j}))<c$}
 In this case, 
$e(f(x), f(p_{j}))<e(\theta, f(p_{j}))$ and $d(x, p_{j})<d(\omega, p_{j})$. 
The strong triangle inequality shows that  
$e(\theta, f(x))=e(\theta, f(p_{j}))$
and $d(\omega, x)=d(\omega, p_{j})$. 
Since 
$e(\theta, f(p_{j}))=
d(\omega, x)$ (see the fact \ref{item:ppp}), 
we  obtain 
 $e(f(x), \theta)=d(x, \omega)$. 

\yocase{2}{$c<e(f(x), f(p_{j}))$}
Since $c=e(\theta, f(p_{j}))=d(\omega, p_{j})$, 
we have 
$e(\theta, f(p_{j}))<e(f(x), f(p_{j}))$ and 
$d(\omega, p_{j})<d(x, p_{j})$. 
Using the strong triangle inequality again, we also have 
 $e(f(x), f(p_{j}))=e(f(x), \theta)$ and 
$d(x, p_{j})=d(x, \omega)$. 
Since $f$ is isometric, 
 we obtain $e(f(x), f(p_{j}))=d(x, p_{j})$, 
 and hence  $e(f(x), \theta)=d(x, \omega)$. 
This finishes the proof. 
\end{proof}

\begin{lem}\label{lem:isopetapeta}
Let $R$ be an uncountable range set,  and 
$(X, d)$ and $(Y, e)$ be  $R$-petaloid ultrametric spaces. 
Let $k\in \zz_{\ge 0}$, 
$T_{0}, \dots, T_{k}\in \yoofam{R}$
and $S\in \yoofam{R}$ such that 
$\bigcup_{i=0}^{k}T_{i}\yosub S$. 
For each $i\in \{0, \dots, k\}$, let 
$g_{i}\colon \yopetal{X}{T_{i}}\to \yopetal{Y}{T_{i}}$ be 
an  isometric bijection. 
Assume that the following condition is satisfied:
\begin{enumerate}[label=\textup{(G)}]
\item\label{item:32:coh}
For all $i, j\in \{0, \dots, k\}$, 
we have $g_{i}(x)
=g_{j}(x)$ for all 
$x\in \yopetal{X}{T_{i}\cap T_{j}}$. 
\end{enumerate}
Then we can glue $\{g_{i}\}_{i=0}^{k}$ together and extend it, i.e., 
there exists an isometric
bijection  
$g\colon \yopetal{X}{S}\to 
\yopetal{Y}{S}$ such that 
$g|_{\yopetal{X}{T_{i}}}=g_{i}$ for all 
$i\in \{0, \dots, k\}$. 
\end{lem}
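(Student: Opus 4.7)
The plan is to first construct an isometric bijection $g \colon G \to H$, where $G = \bigcup_{i=0}^{k} \yopetal{X}{T_{i}}$ and $H = \bigcup_{i=0}^{k} \yopetal{Y}{T_{i}}$, by gluing the $g_{i}$'s, and then to extend $g$ to $\yopetal{X}{S} \to \yopetal{Y}{S}$ by a back-and-forth argument using Lemma \ref{lem:extpt}. The definition $g(x) = g_{i}(x)$ for $x \in \yopetal{X}{T_{i}}$ is well-defined by property \ref{item:pr:cap} together with hypothesis \ref{item:32:coh}.

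A preliminary step, which I expect to be the main technical obstacle, is to verify that each restriction $g_{i}|_{\yopetal{X}{T_{i} \cap T_{j}}}$ is a bijection onto $\yopetal{Y}{T_{i} \cap T_{j}}$. One inclusion follows directly from \ref{item:32:coh}, since the common value of $g_{i}(x)$ and $g_{j}(x)$ on $\yopetal{X}{T_{i} \cap T_{j}}$ lies in $\yopetal{Y}{T_{i}} \cap \yopetal{Y}{T_{j}} = \yopetal{Y}{T_{i} \cap T_{j}}$. For the reverse inclusion, suppose for contradiction that $x \in \yopetal{X}{T_{i}} \setminus \yopetal{X}{T_{i} \cap T_{j}}$ but $g_{i}(x) \in \yopetal{Y}{T_{i} \cap T_{j}}$. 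By Lemma \ref{lem:18:p6}, choose $z \in \yopetal{X}{T_{i} \cap T_{j}}$ realizing $d(x, \yopetal{X}{T_{i} \cap T_{j}})$; by property \ref{item:pr:distance}, this distance is a positive element of $T_{i} \setminus T_{j}$. On the other hand, $g_{i}(x)$ and $g_{i}(z)$ both lie in the $(T_{i} \cap T_{j})$-valued space $\yopetal{Y}{T_{i} \cap T_{j}}$, so $d(x, z) = e(g_{i}(x), g_{i}(z))$ must belong to $T_{i} \cap T_{j}$. Since $T_{i} \setminus T_{j}$ and $T_{i} \cap T_{j}$ are disjoint among positive values, this is a contradiction.

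With this sub-piece bijection in hand, I would verify that $g \colon G \to H$ is isometric by taking arbitrary $x \in \yopetal{X}{T_{i}}$ and $y \in \yopetal{X}{T_{j}}$. Using Lemma \ref{lem:approx} together with the fact that $\yotr(x) \yosub T_{i}$, one deduces $d(x, \yopetal{X}{T_{j}}) = d(x, \yopetal{X}{T_{i} \cap T_{j}})$, and selects $p \in \yopetal{X}{T_{i} \cap T_{j}}$ realizing this common value via Lemma \ref{lem:18:p6}. The standard ultrametric isosceles argument gives $d(x, y) = d(x, p) \lor d(p, y)$. Setting $\tilde{p} = g_{i}(p) = g_{j}(p)$, the isometries of $g_{i}$ and $g_{j}$ combined with the strong triangle inequality yield $e(g_{i}(x), g_{j}(y)) \le d(x, p) \lor d(p, y) = d(x, y)$. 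For the reverse inequality, I would run the same argument symmetrically in $Y$: applying Lemma \ref{lem:approx} to $g_{i}(x) \in \yopetal{Y}{T_{i}}$ yields an analogous nearest point $\tilde{q} \in \yopetal{Y}{T_{i} \cap T_{j}}$ for $g_{i}(x)$ in $\yopetal{Y}{T_{j}}$, and the sub-piece bijection lets us pull $\tilde{q}$ back to a point of $\yopetal{X}{T_{i} \cap T_{j}}$ for an identical estimate in $X$.

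Finally, since $S$ is tenuous and hence countable, property \ref{item:pr:sep} makes $\yopetal{X}{S}$ and $\yopetal{Y}{S}$ separable complete ultrametric spaces. I would enumerate countable dense subsets of each, and perform a standard back-and-forth construction by alternately applying Lemma \ref{lem:extpt} (forth) and its analogue with the roles of $(X, d)$ and $(Y, e)$ exchanged (back) to extend the current finite set $A$ by one point. After countably many steps, the resulting isometric bijection on a dense subset of $\yopetal{X}{S}$ extends uniquely to $\yopetal{X}{S} \to \yopetal{Y}{S}$ by completeness. Its restriction to $\yopetal{X}{T_{i}}$ coincides with $g_{i}$ for every $i$ by construction, since $G$ is preserved throughout the back-and-forth.
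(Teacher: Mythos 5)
Your proposal is correct and follows essentially the same route as the paper: glue the $g_{i}$ into a single map on $G=\bigcup_{i=0}^{k}\yopetal{X}{T_{i}}$, then extend it to an isometric bijection $\yopetal{X}{S}\to\yopetal{Y}{S}$ by a back-and-forth argument that alternately applies Lemma \ref{lem:extpt} to the map and to its inverse, finishing by density and completeness. Your explicit verification that the glued map is an isometric bijection of $G$ onto $H$ (via the sub-piece bijection and the nearest-point/isosceles arguments using Lemmas \ref{lem:18:p6} and \ref{lem:approx}) is correct and fills in a step the paper treats as immediate.
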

\begin{proof}
Put 
$G=\bigcup_{i=0}^{k}\yopetal{X}{T_{i}}$ and
$H=\bigcup_{i=0}^{k}\yopetal{Y}{T_{i}}$. 
We notice that 
$\yopetal{X}{S}\setminus G$, 
and 
$\yopetal{Y}{S}\setminus H$ are 
separable and 
take a countable dense subsets 
$A=\{a_{i}\}_{i\in \zz_{\ge 1}}$ and 
$B=\{b_{i}\}_{i\in \zz_{\ge 1}}$ of 
$\yopetal{X}{S}\setminus G$, 
and 
$\yopetal{Y}{S}\setminus H$, respectively. 
We put $A_{i}=\{a_{1}, \dots, a_{i}\}$ and 
$B_{i}=\{b_{1}, \dots, b_{i}\}$
for all $i\in \zz_{\ge 1}$. 
We also put $A_{0}=B_{0}=\emptyset$. 
Define a map $u\colon G\to H$ by 
$u(x)=g_{i}(x)$ if $x\in \yopetal{X}{T_{i}}$. 
By the assumption \ref{item:32:coh}, 
the map
$u$ is well-defined. 
To construct 
an isometric bijection $g$, 
we use the  back-and-forth argument, in which we  repeat the two types of operations alternately; in  the first operation, we extend the domain of an isometric map,   and in the second operation, we extend the codomain  using the inverse map. 
Namely, we 
will
construct  a sequence  
$\{w_{i}\colon P_{i}\to Q_{i}\}_{i\in \zz_{\ge 0}}$ of maps such that: 
\begin{enumerate}[label=\textup{(W\arabic*)}]
\item\label{item:ww:ba}
 for every $i\in \zz_{\ge 0}$, 
$P_{i}$ and $Q_{i}$ are subsets of 
$\yopetal{X}{S}$ and $\yopetal{Y}{S}$, 
respectively,  satisfying that 
$G\cup A_{i}\yosub P_{i}$ and 
$H\cup B_{i}\yosub Q_{i}$;
\item\label{item:ww:isom}
each $w_{i}$ is an isometric bijection;
\item\label{item:ww:ext}
for every $i\in \zz_{\ge 0}$, 
we have $w_{i}|_{G}=u$;
\item\label{item:ww:gg}
for every $i\in \zz_{\ge 0}$, 
we have 
$P_{i}\yosub P_{i+1}$, $Q_{i}\yosub Q_{i+1}$, 
and $w_{i+1}|_{P_{i}}=w_{i}$. 
\end{enumerate}
First, 
we define 
$P_{0}=G$, 
$Q_{0}=H$, and 
$w_{0}=u$. 
Next fix $k\in \zz_{\ge 0}$ and 
assume that we have already obtained  
$P_{k}$, 
$Q_{k}$, 
and 
 an 
isometric bijection 
$w_{k}\colon 
P_{k}
\to Q_{k}$ such that 
$w_{k}|_{G}=u$
and
$G\cup A_{i}\yosub P_{i}$ and 
$H\cup B_{i}\yosub Q_{i}$. 
We now define 
$v_{k+1}\colon P_{k}\cup \{a_{k+1}\}
\to Q_{k}\cup \{v_{k+1}(a_{k+1})\}$
as an extended map of $w_{k}$
stated in 
 Lemma \ref{lem:extpt}. 
Similarly, applying  Lemma \ref{lem:extpt} to 
$v_{k+1}^{-1}\colon Q_{k}\cup \{v_{k+1}(a_{k+1})\}
\to P_{k}\cup \{a_{k+1}\}$, 
we obtain 
an isometric map  
$m_{k+1}\colon Q_{k}\cup \{v_{k+1}(a_{k+1})\} \cup 
\{b_{k+1}\}\to 
P_{k}\cup \{a_{k+1}\}\cup \{m_{k+1}(b_{k+1})\}$. 
Then we define $w_{k+1}=m_{k+1}^{-1}$, 
$P_{k+1}=P_{k}\cup \{a_{k+1}\}\cup \{m_{k+1}(b_{k+1})\}$
and 
$Q_{k+1}=Q_{k}\cup \{v_{k+1}(a_{k+1})\} \cup 
\{b_{k+1}\}$. 

Therefore we obtain  sequences
 $\{w_{i}\}_{i\in \zz_{\ge 0}}$, 
$\{P_{i}\}_{i\in \zz_{\ge 0}}$,  
and 
$\{Q_{i}\}_{i\in \zz_{\ge 0}}$ as required. 
Put $K=\bigcup_{i\in \zz_{\ge 0}}P_{i}$
and 
$L=\bigcup_{i\in \zz_{\ge 0}}Q_{i}$. 
Note that 
$K$ and 
$L$ are
dense in $\yopetal{X}{S}$ and 
$\yopetal{Y}{S}$, 
respectively. 
We define 
$h\colon K
\to L$ by 
$h(x)=w_{k}(x)$, 
where $k\in \zz_{\ge 0}$ satisfies $x\in P_{k}$. 
Due to the conditions \ref{item:ww:ba}--\ref{item:ww:gg}, 
the map $h$ is well-defined. 
Using a canonical method by Cauchy sequences
together with the completeness of 
$\yopetal{X}{S}$ and $\yopetal{Y}{S}$, 
we obtain an isometric bijection 
$g\colon \yopetal{X}{S}\to \yopetal{Y}{S}$ such that 
$g|_{G}=u$. 
Since 
$h$ is an isometric bijection, 
and since  $K$ and $L$ are dense in $\yopetal{X}{S}$ and 
$\yopetal{Y}{S}$, respectively, 
the map $g$ is an isometric bijection between $\yopetal{X}{S}$
and $\yopetal{Y}{S}$. 
This proves the lemma. 
\end{proof}

\begin{thm}\label{thm:petoidext}
Let $R$ be an uncountable range set, 
and  
 $(X, d)$ and $(Y, e)$ be 
$R$-petaloid ultrametric spaces. 
Let $f\colon \yopetal{X}{S}\to \yopetal{Y}{S}$ be an 
isometric bijection, where $S\in \yoofam{R}$. 
Then there exists an 
isometric bijection 
$F\colon X\to Y$ such that 
$F|_{\yopetal{X}{S}}=f$. 
\end{thm}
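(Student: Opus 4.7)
The plan is to construct a coherent family of isometric bijections $F_T\colon \yopetal{X}{T} \to \yopetal{Y}{T}$ indexed by $T \in \yoofamqq{R}{S}$, all extending $f$, and then glue them together and pass to a completion. By Lemma \ref{lem:dense}, the set $D := \bigcup_{T \in \yoofamqq{R}{S}}\yopetal{X}{T}$ is dense in $X$ and the analogously defined $E \subseteq Y$ is dense in $Y$; combined with the completeness of $X$ and $Y$ afforded by Proposition \ref{prop:petalcomp}, a standard Cauchy-sequence argument will promote an isometric bijection $D \to E$ to the desired $F\colon X \to Y$.

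I would build $\{F_T\}$ by induction on $n := |T \setminus S|$, carrying the inductive invariant that $F_T|_{\yopetal{X}{T'}} = F_{T'}$ whenever $T' \subseteq T$ both lie in $\yoofamqq{R}{S}$. The base case $n=0$ forces $F_S := f$. For the step from stage $n$ to stage $n+1$, fix $T$ with $T \setminus S = \{r_0, \ldots, r_n\}$ and put $T_i := T \setminus \{r_i\}$. Each $T_i$ satisfies $|T_i \setminus S| = n$, so $F_{T_i}$ is already available, and the maps $\{F_{T_i}\}_{i=0}^{n}$ verify the coherence hypothesis \ref{item:32:coh} of Lemma \ref{lem:isopetapeta}: indeed $T_i \cap T_j = T \setminus \{r_i, r_j\}$ satisfies $|(T_i \cap T_j) \setminus S| = n-1$, so $F_{T_i \cap T_j}$ was constructed at an earlier stage, and the previous invariant forces both $F_{T_i}$ and $F_{T_j}$ to restrict to $F_{T_i \cap T_j}$ on $\yopetal{X}{T_i \cap T_j}$. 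Applying Lemma \ref{lem:isopetapeta} with its ambient piece taken to be $T$ and with the above $T_i$ then produces an isometric bijection $F_T\colon \yopetal{X}{T} \to \yopetal{Y}{T}$ extending each $F_{T_i}$. For any $T'' \subsetneq T$ in $\yoofamqq{R}{S}$, one has $T'' \subseteq T_i$ for some $i$, whence $F_T|_{\yopetal{X}{T''}} = F_{T_i}|_{\yopetal{X}{T''}} = F_{T''}$, preserving the invariant.

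With the coherent family at hand, I would define $F_0\colon D \to E$ by $F_0(x) := F_T(x)$ for any $T \in \yoofamqq{R}{S}$ with $x \in \yopetal{X}{T}$; coherence makes this well-defined, and $F_0$ is an isometric bijection. Extending $F_0$ along Cauchy sequences using the completeness of $X$ and $Y$ yields the sought isometric bijection $F\colon X \to Y$, and by construction $F|_{\yopetal{X}{S}} = F_S = f$.

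The main obstacle is the bookkeeping needed to propagate the compatibility invariant through the induction: although each $F_T$ is produced from only finitely many predecessors via Lemma \ref{lem:isopetapeta}, the collection $\yoofamqq{R}{S}$ itself is typically uncountable, and the fact that all these independently produced choices patch together to give a single well-defined map on $D$ must be extracted entirely from the invariant rather than from any explicit enumeration. A subsidiary technical point is that the output of Lemma \ref{lem:isopetapeta} at stage $n+1$ must be verified to respect restriction to \emph{every} smaller piece $\yopetal{X}{T''}$, not only to the $T_i$ directly fed into the lemma, which is exactly where the chain $T'' \subseteq T_i \subseteq T$ and the invariant at the earlier stage come to the rescue.
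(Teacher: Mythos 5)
Your proposal is correct and follows essentially the same route as the paper: construct a coherent family of isometric bijections $\yopetal{X}{T}\to\yopetal{Y}{T}$ for $T\in\yoofamqq{R}{S}$ extending $f$ via the gluing Lemma \ref{lem:isopetapeta}, then patch them on the dense union provided by Lemma \ref{lem:dense} and extend by completeness (Proposition \ref{prop:petalcomp}). The only difference is organizational: you run an ordinary induction on the size of $T\setminus S$, handling all pieces of a given size at once, whereas the paper enumerates $\yoofamqq{R}{S}$ and argues by transfinite induction; your coherence invariant and its verification are sound, so this variant works.
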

\begin{proof}
Let  $\kappa$ be the cardinal of 
$\yoofamqq{R}{S}$. 
We represent 
$\yoofamqq{R}{S}=\{S_{\alpha}\}_{\alpha<\kappa}$ such that 
$R_{0}=S$ and 
$S_{\alpha}\neq S_{\beta}$ for all distinct 
 $\alpha, \beta< \kappa$. 
We shall construct a family 
$\{g_{\alpha}\colon \yopetal{X}{S_{\alpha}}\to \yopetal{Y}{S_{\alpha}}\}_{\alpha<\kappa}$
of isometric bijections
such that: 
\begin{enumerate}[label=\textup{(C)}]
\item\label{item:cocoh}
If $\alpha, \beta<\kappa$ satisfy 
$S_{\beta}\yosub S_{\alpha}$, 
then $g_{\alpha}|_{\yopetal{X}{S_{\beta}}}=g_{{\beta}}$. 
\end{enumerate}
We use transfinite induction. 
If $\alpha=0$, 
we define $g_{0}=f$. 
We next consider general steps. 
At the $\alpha$-th step ($\alpha<\kappa$), 
we 
will define not only 
$g_{\alpha}$ but also $g_{\gamma}$ 
for all $\gamma<\kappa$ for which  $S_{\gamma}\yosub S_{\alpha}$. 
Fix $\alpha<\kappa$ and assume that we have  
already 
conducted the $\beta$-step
 for all $\beta<\alpha$. 
Then now  we 
deal with  the $\alpha$-step 
  as follows. 

\yocase{1}{if for all $\beta<\alpha$, we have 
$\yopetal{X}{S_{\alpha}}
\not \yosub \yopetal{X}{S_{\beta}}$}
In this case,  
we denote by the $\yoabset$ the set of all 
$\beta<\kappa$ such that 
$g_{\beta}$ is already defined and 
$S_{\beta}\yosub S_{\alpha}$. 
Since $S_{\alpha}\setminus S$ is a finite set, 
the  $\yoabset$ is also finite. 
By the condition \ref{item:cocoh} with respect to 
all $\gamma<\kappa$ for which $g_{\gamma}$ is 
already defined, 
the set $S_{\alpha}$, 
the family $\{S_{\beta}\}_{\beta \in \yoabset}$, 
and the maps $\{g_{\beta}\}_{\beta\in \yoabset}$
satisfy the assumptions of 
Lemma \ref{lem:isopetapeta}
(especially,  the condition 
\ref{item:32:coh}). 
Then  we obtain an isometric map 
$g_{\alpha}\colon \yopetal{X}{S_{\alpha}}\to \yopetal{Y}{S_{\alpha}}$ such that 
$g_{\alpha}|_{\yopetal{X}{S_{\beta}}}=g_{\beta}$
for all $\beta\in \yoabset$. 
Moreover, 
for all $\gamma<\kappa$ with $\alpha<\gamma$ such that 
$S_{\gamma}
\yosub S_{\alpha}$, 
we define 
$g_{\gamma}=g_{\beta}|_{\yopetal{X}{S_{\gamma}}}$.

\yocase{2}{there exists $\beta<\kappa$ such that 
$\yopetal{X}{S_{\alpha}}\yosub \yopetal{X}{S_{\beta}}$}
Take a minimal ordinal  $\beta<\kappa$  such that 
$S_{\alpha}\yosub S_{\beta}$. 
Then,  at the $\beta$-th step,  the map 
$g_{\alpha}$  has been already defined by 
$g_{\alpha}=g_{\beta}|_{\yopetal{X}{S_{\alpha}}}$.

Therefore, we obtain
a family 
$\{g_{\alpha}\colon \yopetal{X}{S_{\alpha}}\to \yopetal{Y}{S_{\alpha}}\}_{\alpha<\kappa}$
of isometric bijections with 
the condition \ref{item:cocoh}. 
Put 
$\yodensea=\bigcup_{T\in \yoofamqq{R}{S}}\yopetal{X}{T}$
and $\yodenseb=\bigcup_{T\in \yoofamqq{R}{S}}\yopetal{Y}{T}$. 
We define a map 
$G\colon 
\yodensea\to 
\yodenseb$ 
by 
$G(x)=g_{\alpha}(x)$ if $x\in \yopetal{X}{S_{\alpha}}$. 
By the condition \ref{item:cocoh}, 
the map $G$ is well-defined. 
 Lemma \ref{lem:dense}
 shows that $\yodensea$ and $\yodenseb$
 are 
dense in $X$ and $Y$, respectively. 
In a canonical way using Cauchy sequences, 
owing to Proposition \ref{prop:petalcomp}, 
we can obtain $F\colon X\to Y$ 
satisfying that  
$F|_{\yodensea}=G$. 
Then $F$ is an isometric bijection as required. 
\end{proof}

Using Theorem \ref{thm:petoidext}, 
we can  prove our main results. 

\begin{proof}
[Proof of Theorem \ref{thm:main1}]
Let $R$ be an uncountable range set, and 
$(X, d)$ and $(Y, e)$ be 
 $R$-petaloid $R$-ultrametric spaces. 
Put $S=\{0\}$ and 
let $f\colon \yopetal{X}{S}\to \yopetal{X}{S}$
 be the trivial map (see 
 Lemma \ref{lem:000}). 
Applying Theorem 
\ref{thm:petoidext} to 
$S$ and $f$, 
we obtain an isometric 
bijection 
$F\colon X\to Y$. 
This finishes the proof of 
Theorem \ref{thm:main1}. 
\end{proof}

\begin{proof}[Proof of Theorem \ref{thm:222}]
Let $R$ be an uncountable 
range set, 
and $(X,  d)$ be an $R$-petaloid 
ultrametric space. 
Assume that  $A$ is 
 a 
 compact  subset of $X$ and $f\colon A\to X$ is  an isometric embedding. 
Using Proposition \ref{prop:compacttrace}, 
we take $S\in \yoofam{R}$ with 
$K\yosub \yopetal{X}{S}$ and 
$\phi(K)\yosub\yopetal{X}{S}$. 
Since 
$\yopetal{X}{S}$ is compactly ultrahomogeneous
(see \cite[Corollary 6.18]{MR2754373}), 
we obtain an isometric bijection 
$g\colon \yopetal{X}{S}\to \yopetal{X}{S}$ with 
$g|_{K}=f$. 
Theorem \ref{thm:petoidext} implies that 
there exists an isometric bijection 
$F\colon X\to X$ such that 
$F|_{\yopetal{X}{S}}=g$. 
The map $F$ is as desired, 
and hence the proof of Theorem \ref{thm:222} 
is finished. 
\end{proof}

\section{Examples}\label{sec:examples}
We first briefly review the four constructions of 
injective spaces. 

For a range set $R$, 
we 
define an ultrametric 
$\yodisdis_{R}$ on $R$ by 
$\yodisdis_{R}(x, y)= x\lor y$
if $x\neq y$; otherwise, $0$. 
We call it the \emph{nearly discrete (ultra)metric}
and call the topology generated by $\yodisdis_{R}$
the \emph{nearly discrete topology}. 
For a topological space $X$ and
a range set  $R$, we 
denote by $\yomaps{X}{R}$
the set of all continuous maps 
$f\colon X\to R$ from 
$X$ to $(R, \yodisdis_{M})$ 
 such that $0\in f(X)$. 
For $f, g\in \yomaps{X}{R}$, 
we define $\yomaindis(f, g)$ by 
the infimum of all 
$\epsilon \in (0, \infty)$ such that 
$f(x)\le g(x)\lor \epsilon$ and 
$g(x)\le f(x)\lor \epsilon$ for all 
$x\in X$.

Let $X$ be a  topological space, 
and $R$ be  a range set,  
we denote by 
$\yocmet{X}{R}$ the 
set of all 
continuous $R$-valued  pseudo-ultrametrics $d\colon X\times X\to R$ on $X$, where $X\times X$ and $R$ are equipped with the 
product topology and the Euclidean topology, respectively. 
For $d, e\in \yocmet{X}{S}$, 
we define $\umetdis_{X}^{S}(d, e)$ the 
infimum of all $\epsilon \in S$ such that 
$d(x, y)\le e(x, y)\lor \epsilon$ and 
$e(x, y)\le d(x, y)\lor \epsilon$ for all $x, y\in X$. 
For more details on 
$\yomaps{X}{R}$ and $\yocmet{X}{R}$, 
we refer the readers to \cite{Ishiki2023const}.

For a range set $R$, 
we denote by $\yonghsp{R}$ 
the set of all isometry classes of 
compact $R$-ultrametric spaces
and denote by  
$\yonghdis$ the 
non-Archimedean Gromov--Hausdorff distance on 
$\yonghsp{R}$, 
i.e., the value $\yonghdis((X, d), (Y, e))$ is the 
infimum of all $\yohdis(i(X), j(Y); Z, h)$, 
where $(Z, h)$ is an $R$-valued ultrametric space, 
 $\yohdis(*, *; Z, h)$ is the Hausdorff distance 
 associated with  $(Z, h)$,
 and  $i\colon X\to Z$ and $j\colon Y\to Z$ are isometric embedding. 
We call $(\yonghsp{R}, \yonghdis)$ the 
Gromov--Hausdorff $R$-ultrametric space
(For more details, see 
\cite{Z2005}, \cite{qiu2009geometry} and \cite{MR4282005}).

We denote by $\yotomega$ the set of all non-negative integers according to the set-theoretic notation. 
Let $R$ be a range set. 
We also denote by $\yomapsco{R}{\yotomega}$ 
the set of all function $f\colon R\to \yotomega$ such that $f(0)=0$ and 
the set 
$\{0\}\cup \{\, x\in R \mid f(x)\neq 0\, \}$ is tenuous. 
For $f, g\in \yomapsco{R}{\yotomega}$, 
we define an $R$-ultrametric $\yomaindisco$ on $\yomapsco{R}{\yotomega}$ by 
$\yomaindisco(f, g)=
\max\{\, r\in R\mid f(r)\neq g(r)\, \}$ if $f\neq g$; 
otherwise, $\yomaindisco(f, g)=0$. 
For more information, we refer the readers to 
\cite{MR2754373} and 
\cite{MR2667917}.

\begin{thm}\label{thm:univ4}

For every  range set $R$, 
all the following spaces are 
$\youfin(R)$ and 
complete. 
Moreover, if $R$ is finite or countable, 
they are separable. 
\begin{enumerate}[label=\textup{(\arabic*)}]
\item\label{item:sp:maps}
The 
space $(\yomaps{\yocantorc}{R}, \yomaindis)$ of 
all continuous functions from the 
Cantor set $\yocantorc$ into 
$(R, \yodisdis_{R})$. 

\item\label{item:sp:met}
The ultrametric space
$(\yocmet{X}{R}, \umetdis_{X}^{R})$ 
of 
all $R$-valued  continuous 
pseudo-ultrametrics on a 
compact ultrametrizable  space  $X$
with an accumulation point.

\item\label{item:sp:gh}
The  Gromov--Hausdorff $R$-ultrametric  space 
$(\yonghsp{R}, \yonghdis)$.

\item\label{item:sp:cofunc}
The $R$-ultrametric space 
$(\yomapsco{R}{\yotomega}, \yomaindisco)$. \end{enumerate}
\end{thm}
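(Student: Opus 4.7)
The plan is to verify $\youfin(R)$-injectivity, completeness, and separability in the countable case for each of the four spaces individually, since each admits its own concrete description and has been treated in prior literature. The proof will largely be a careful assembly of references together with a few verifications, rather than a single unified argument.

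For space \ref{item:sp:maps}, $(\yomaps{\yocantorc}{R}, \yomaindis)$, the required properties are established in \cite{Ishiki2023const}. The nearly discrete topology on $R$ makes continuous maps $f\colon \yocantorc\to R$ well structured: each level set $f^{-1}([r, \infty))$ is clopen, and since the Cantor set has arbitrarily fine clopen partitions, a finite configuration can be extended to realize any target $R$-valued finite ultrametric by prescribing values on appropriate clopen pieces. For space \ref{item:sp:met}, $(\yocmet{X}{R}, \umetdis_{X}^{R})$, again \cite{Ishiki2023const} is the reference; completeness follows from the fact that the pointwise limit of a $\umetdis_X^R$-Cauchy sequence of continuous pseudo-ultrametrics is again a continuous pseudo-ultrametric, and injectivity exploits the accumulation point of $X$ to provide the ``room'' required to realize arbitrary finite metric patterns by gluing pseudo-ultrametrics supported on small clopen balls.

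For space \ref{item:sp:gh}, the non-Archimedean Gromov--Hausdorff space $(\yonghsp{R}, \yonghdis)$, the $\youfin(R)$-injectivity is exactly the content of Wan's theorem in \cite{MR4282005}; completeness is a standard Gromov--Hausdorff-style argument, adapted to the ultrametric setting by noting that the strong triangle inequality is preserved under $\yonghdis$-limits. For space \ref{item:sp:cofunc}, $(\yomapsco{R}{\yotomega}, \yomaindisco)$, both properties are due to Gao--Shao \cite{MR2754373}, who construct extensions of finite configurations by modifying values on the relevant coordinates of $R$. Separability when $R$ is countable follows in each case by exhibiting an obvious countable dense subset: locally constant $\qq$-valued maps on the Cantor set for \ref{item:sp:maps}, rational-valued pseudo-ultrametrics supported on a countable dense subset of $X$ for \ref{item:sp:met}, isometry classes of finite rational-valued ultrametric spaces for \ref{item:sp:gh}, and finitely supported maps taking values in $\yotomega$ for \ref{item:sp:cofunc}.

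The main obstacle is not a single mathematical difficulty but the task of coherently weaving together results from different sources, aligning conventions (range sets, nearly discrete topology, tenuous support, etc.) with those in each reference. The one non-bookkeeping item is the completeness of $(\yonghsp{R}, \yonghdis)$, where one must check that the classical Gromov--Hausdorff completeness proof goes through with all distances taking values in the range set $R$; this is handled by choosing an appropriate common $R$-ultrametric host space at each stage of the subsequence argument, using the $\youfin(R)$-injectivity already in hand.
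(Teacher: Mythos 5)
Your proposal takes essentially the same route as the paper: the paper's proof of Theorem \ref{thm:univ4} is precisely an assembly of citations, with cases \ref{item:sp:maps} and \ref{item:sp:met} referred to \cite{Ishiki2023const}, case \ref{item:sp:gh} to Wan \cite{MR4282005}, and case \ref{item:sp:cofunc} to \cite{MR2754373} (together with \cite{MR2667917} and Neretin), exactly as you do. One small correction: your proposed countable dense subsets should consist of $R$-valued objects (e.g.\ locally constant $R$-valued maps, $R$-valued pseudo-ultrametrics, finite $R$-valued ultrametric spaces), not $\qq$-valued ones, since $\qq$-valued objects need not lie in the spaces in question; with $R$ countable the fix is immediate.
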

\begin{proof}
The cases  
\ref{item:sp:maps} and 
\ref{item:sp:met} are proven in 
\cite[Theorem 3.2 and Theorem 4.2]{Ishiki2023const}, respectively
For the  separable part, see  \cite[Proposition 3.5 and Proposition 4.3]{Ishiki2023const}. 
The case \ref{item:sp:gh} is proven by 
Wan \cite{MR4282005}. 
The case \ref{item:sp:cofunc}
can be found in 
\cite[Section 6]{MR2754373}, 
\cite[Proposition 11]{MR2667917} and \cite[Subsection 1.3]{Neretin2022}. 
\end{proof}

We now prove 
Theorem \ref{thm:petaloidexam}
stating 
that all the spaces described above are petaloid. 
\begin{proof}[Proof of Theorem \ref{thm:petaloidexam}]
Let $R$ be an uncountable range set,  and 
 $X$ be a compact ultrametrizable space with 
an accumulation point. 
Let $\yocantorc$ stands for  the Cantor set. 
For every $S\in \yoofam{R}$, 
we define the petals of the spaces stated in the theorem by 
$\yopetal{\yomaps{\yocantorc}{R}}{S}=\yomaps{\yocantorc}{S}$, 
$\yopetal{\yocmet{X}{R}}{S}=\yocmet{X}{S}$, 
$\yopetal{\yonghsp{R}}{S}=\yonghsp{S}$, 
and 
$\yopetal{\yomapsco{R}{\yotomega}}{S}=
\yomapsco{S}{\yotomega}$. 
Then these families
  satisfy the 
properties, 
\ref{item:pr:cup}, 
and 
\ref{item:pr:cap}. 
Theorem \ref{thm:univ4} implies 
that they enjoy
the property \ref{item:pr:sep}. 
It remains to show that 
they satisfy the condition  \ref{item:pr:distance}. 

We now consider  the case of 
$(\yomapsco{R}{\yotomega}, \yomaindisco)$. 
To verify  the properties
 \ref{item:pr:distance}, 
we need the following
claim that is deduced from the definition of $\yomaindisco$. 
\begin{clm}\label{clm:sub}
Let  $R$ be a range set.  
Take  $r\in R\setminus \{0\}$, 
and $f, g\in \yomapsco{R}{\yotomega}$. 
Then 
$r=\yomaindisco(f, g)$ if and only if 
$f(r)\neq g(r)$ and
$f(x)=g(x)$
whenever 
$r<x$.  
\end{clm}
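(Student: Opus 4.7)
The plan is to unpack the definition of $\yomaindisco$ directly; the equivalence is essentially a reformulation of ``the maximum of a set equals $r$''. The only subtle point is confirming that the maximum appearing in the definition of $\yomaindisco(f,g)$ is actually attained, which requires the tenuousness of the supports of $f$ and $g$.

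First I would record the following observation: the set $D = \{\, s \in R \mid f(s) \neq g(s)\, \}$ is contained in $\supp(f) \cup \supp(g)$, which is a union of two tenuous sets and hence tenuous (using Lemma \ref{lem:noref}). By Lemma \ref{lem:discom}, any non-empty subset of a tenuous set that is bounded below by a strictly positive number has only finitely many points above that bound, so the supremum is attained; in particular, whenever $D$ is non-empty, $\max D$ exists in $R$.

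For the forward direction, assume $r = \yomaindisco(f,g)$. Since $r \neq 0$, the alternative clause forces $f \neq g$, so $r = \max D$. Then $r \in D$ gives $f(r) \neq g(r)$, and no element of $D$ exceeds $r$, so $f(x) = g(x)$ for every $x \in R$ with $r < x$.

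For the backward direction, assume $f(r) \neq g(r)$ and $f(x) = g(x)$ whenever $r < x$. The first condition says $r \in D$, so in particular $f \neq g$ and $D$ is non-empty; the second says $D \cap (r, \infty) = \emptyset$, so $r$ is the maximum of $D$. Applying the definition of $\yomaindisco$ yields $\yomaindisco(f,g) = r$.

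I do not expect a genuine obstacle here; the main thing to be careful about is simply to invoke the tenuousness of the supports so that $\max D$ is legitimately well-defined, rather than merely $\sup D$. Everything else is a direct translation of the definition.
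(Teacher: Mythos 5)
Your proposal is correct and follows the same route as the paper, which simply states that the claim ``is deduced from the definition of $\yomaindisco$''; your unpacking of the definition, including the check via Lemma~\ref{lem:noref} and Lemma~\ref{lem:discom} that the maximum of $\{\, s\in R \mid f(s)\neq g(s)\,\}$ is attained, is exactly the intended argument. The only cosmetic point is that this set need not be bounded below by a positive number; what you actually use is that for any $t>0$ it has only finitely many elements in $[t,\infty)$, which already gives attainment of the maximum.
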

Let $S\in \yoofam{R}$ and 
take an arbitrary member $f\in \yomapsco{R}{\yotomega}$. 
Put $T=\{0\}\cup \{\,x\in R\mid f(x)\neq 0 \, \}\in \yoofam{R}$. 
If $T\yosub S$, then 
$\yomaindisco(f, \yomapsco{R}{\yotomega})=
\yomaindisco(f, f)=0$. 
If $T\not\yosub S$, then 
let $r$ be the maximum of $T\setminus S$. 
Thus we have 
$T\cap (r, \infty)\yosub S\cap (r, \infty)$. 
We define $g\in \yomapsco{R}{\yotomega}$ by 
\[
g(x)=
\begin{cases}
f(x) & \text{if $r<x$;}\\
f(r)+1  & \text{if $x=r$;}\\
0 & \text{otherwise.}
\end{cases}
\]
Then by Claim \ref{clm:sub}, we have 
$r=\yomaindisco(f, g)$ and 
$\yomaindisco(f, g)=\yomaindisco(f, \yomapsco{R}{\yotomega})$. 
Thus $\yomaindisco(f, \yomapsco{R}{\yotomega})\in 
T\setminus S$. 
This proves the property \ref{item:pr:distance}. 

Remark that 
using the statements corresponding to Claim \ref{clm:sub} such as 
\cite[Corollary 2.17]{Ishiki2023const}, 
\cite[Corollary 2.30]{Ishiki2023const}, and 
 \cite[Theorem 5.1]{MR4462868}, 
 in a similar way as in  the case of $(\yomapsco{R}{\yotomega}, \yomaindisco)$, 
 we can prove 
that the spaces $(\yomaps{\yocantorc}{R}, \yomaindis)$, 
$(\yocmet{X}{R}, \umetdis_{X}^{R})$, and 
$(\yonghsp{R}, \yonghdis)$ satisfy 
\ref{item:pr:distance}, 
and hence they are 
$R$-petaloid. 
Since there is a small gap, we give a little   explanation of   the case of 
$(\yomaps{\yocantorc}{R}, \yomaindis)$,  for example. 
Let 
$S\in \yoofam{R}$, and 
$f\in \yomaps{\yocantorc}{R}$, 
and 
put $T=f(\yocantorc)\in \yoofam{R}$. 
If $T\yosub S$, then put $g=f$. 
If $T\not\yosub S$, then 
we take non-empty 
clopen subsets $A$ and $B$ of $X$ such that 
$A\cap B=\emptyset$ and $A\cup B=f^{-1}([0, r])$. 
We then
 define 
$g\in \yopetal{\yomaps{\yocantorc}{R}}{S}$ by 
\[
g(x)=
\begin{cases}
f(x) & \text{if $r<f(x)$;}\\
r  & \text{if $x\in A$;}\\
0 & \text{if $x\in B$}. 
\end{cases}
\]
Therefore, by \cite[Corollary 2.17]{Ishiki2023const}, 
in any case, 
we have 
$\yomaindis(f, g)=
\yomaindis(f, \yopetal{\yomaps{\yocantorc}{R}}{S})$, 
and hence $\yomaindis(f, \yopetal{\yomaps{\yocantorc}{R}}{S})\in T\setminus S$.

This completes the proof of 
Theorem \ref{thm:petaloidexam}. 
\end{proof}

\begin{ques}\label{ques:bij}
Let $R$ be a range  set, 
and $\yocantorc$ be the  Cantor set. 
In this setting, 
we see that 
$(\yonghsp{R}, \yonghdis)$
and 
 $(\yocmet{\yocantorc}{R}, \umetdis_{X}^{R})$ are isometric to each other by Theorem \ref{thm:main1}. 
Does  there exist a natural 
 isometric bijection  between 
them?
\end{ques}

\begin{ques}\label{ques:isom}
Let $R$ be an uncountable range set, 
and $(X, d)$ be an $R$-petaloid space. 
We denote by $\mathrm{Isom}(X, d)$
the group of isometric bijections from $X$ into itself. 
What is the isometry group $\mathrm{Isom}(X, d)$?. 
Namely, what interesting 
properties does $\mathrm{Isom}(X, d)$ satisfy?
\end{ques}

\begin{ques}\label{ques:weightisom}
Let  $(X, d)$ and $(Y, e)$ be 
complete 
$\youfin(R)$-injective 
$R$-valued ultrametric spaces
with the same topological weight. 
In this setting, 
 are $(X, d)$ and $(Y, e)$ isometric to each other?
 If not, is there a sufficient and necessary condition for 
 isometry?
 The author thinks that 
 we can find  counterexamples since 
there exists  
 a complete
  $\yofin$-injective metric  space  $(X, d)$
such that   $\mathrm{Isom}(X, d)$ is trivial
(see \cite{MR3665056}). 
\end{ques}

\begin{ac}
The author would like to thank the referee for 
helpful comments and suggestions. 
\end{ac}

\bibliographystyle{myplaindoidoi}
\bibliography{../../../bibtex/UU.bib}

\end{document}